\documentclass[11pt]{amsart}

\usepackage{amsmath}
\usepackage{amssymb}
\usepackage{graphicx}

\DeclareMathOperator{\vol}{vol}

\newcommand{\Eu}{\mathbb{E}}

\renewcommand{\Re}{\mathbb R}
\newcommand{\BB}{\mathbf{B}}

\newcommand{\M}{\mathbb{M}}
\newcommand{\N}{\mathbb{N}}
\newcommand{\PPP}{\mathbb{P}}
\newcommand{\MM}{\mathbf{M}}
\newcommand{\F}{\mathcal{F}}
\renewcommand{\S}{\mathbb{S}}
\renewcommand{\P}{\mathcal{P}}
\newcommand{\PP}{\mathbf{P}}
\newcommand{\z}{\mathbf{z}}
\newcommand{\y}{\mathbf{y}}
\newcommand{\x}{\mathbf{x}}
\newcommand{\q}{\mathbf{q}}
\newcommand{\p}{\mathbf{p}}
\newcommand{\VV}{\mathbf{V}}
\newcommand{\CC}{\mathbf{C}}
\newcommand{\TT}{\mathbf{T}}
\newcommand{\DD}{\mathbf{D}}

\renewcommand{\o}{\mathbf{o}}

\DeclareMathOperator{\conv}{conv}

\DeclareMathOperator{\area}{area}
\DeclareMathOperator{\bd}{bd}

\DeclareMathOperator{\inter}{int}


\renewcommand{\c}{\mathbf{c}}

\pdfminorversion=7

\newtheorem{theorem}{Theorem}[section]
\newtheorem{corollary}[theorem]{Corollary}
\newtheorem{lemma}[theorem]{Lemma}

\theoremstyle{definition}
\newtheorem{definition}[theorem]{Definition}

\newtheorem{remark}[theorem]{Remark}
\newtheorem{conjecture}[theorem]{Conjecture}

\numberwithin{equation}{section}

\title[Truncated density bounds for unit ball packings]{Density bounds for unit ball packings relative to their outer parallel domains} 

\author[K. Bezdek]{K\'{a}roly Bezdek}
\address[K. Bezdek]{Department of Mathematics and Statistics, University of Calgary, Canada and Department of Mathematics, University of Pannonia, Vesz\-pr\'em, Hungary}
\email{{\tt kbezdek@ucalgary.ca}}

\author[Z. L\'angi]{Zsolt L\'angi}
\address[Z. L\'angi]{MTA-BME Morphodynamics Research Group and Department of Algebra and Geometry, Budapest University of Technology and Economics, Budapest, Hungary}
\email{{\tt zlangi@math.bme.hu}}

\keywords{Euclidean $d$-space, Minkowski $d$-space, packing, density, Voronoi decomposition in Minkowski plane, truncated Voronoi cell in Minkowski plane, Dowker-type theorems in Minkowski plane.}
\subjclass[2010]{52C15, 52C17, 52C05, 52C07, 52A40.}

\date{}

\begin{document}

\begin{abstract}
We prove that the highest density of non-overlapping translates of a given centrally symmetric convex domain relative to its outer parallel domain of given outer radius is attained by a lattice packing in the Euclidean plane. This generalizes some earlier (classical) results. Sharp upper bounds are proved for the analogue problem on congruent circular disks in the spherical (resp., hyperbolic) plane and on congruent balls in Euclidean $3$-space.
\end{abstract}

\maketitle

\section{Introduction}
 
 Packings and coverings, in particular, those with congruent balls, have a long history in mathematics, with many applications in sciences. Motivated by recent developments in natural sciences (see for example, \cite{edelsbrunner, senechal} for more information on such developments in molecular biology and crystallography), it seems that there is a need to broaden this investigation to families of convex bodies that are not packings, but in which some control on the overlap of the bodies prevents them to become coverings. These families are usually called \emph{soft packings}. In the recent papers \cite{Be-La-2015, Be-La-2024}, the authors investigated two notions of density related to soft packings of convex bodies in Euclidean spaces. In this paper we continue our investigation, and examine also soft packings in spaces of constant curvature. Before stating our results, we present a brief review on the background of our research.

Let us recall the classical notion of density for translative packings of convex bodies in Euclidean spaces. Let the Euclidean norm of the $d$-dimensional Euclidean space $\Eu^d$ be denoted by $\|\cdot\|$ and let $\BB^d:=\{\x\in \Eu^d | \|\x\|\leq 1\}$ denote the (closed) $d$-dimensional unit ball centered at the origin $\o$ in  $\Eu^d$, where $d>1$. Next,
let $\MM$ be an arbitrary $\o$-symmetric convex body (i.e., let $\MM$ be a compact, convex set with non-empty interior, and with $\MM=-\MM$) in $\Eu^d$, $d>1$ and let $\M$ be the $d$-dimensional Minkowski space (i.e., $d$-dimensional normed space) with $\MM$ as its unit ball, and norm $\| \x \|_{\MM}:=\min\{\mu>0 | \x\in\mu\MM\}$ for any $\x\in\Eu^d$.
Then let $\P:=\{\c_i+\MM\ |\ i\in I\subset\N \ {\rm with}\ \|\c_j-\c_k\|_{\M} \ge 2\ {\rm for}\ {\rm all}\ j\neq k\in I\}$ be an arbitrary {\it packing of unit balls} in $\M$ with $\mathbf{P}:=\bigcup_{i\in I}(\c_i+\MM)$. Finally, let
$$\delta_{\M}(\P):=\limsup_{R\to +\infty}\frac{\vol_d(\mathbf{P}\cap R\BB^d)}{\vol_d( R\BB^d) }$$
be the (upper) density of $\P$ in $\M$ and let
$$\delta_{\M}:=\sup_{\P}{\delta}_{\M}(\P) $$
be the {\it largest density} of packings of unit balls in $\M$, where $\vol_d(\cdot)$ denotes the $d$-dimensional volume (i.e., Lebesgue measure) of the corresponding set in $\Eu^d$. Computing $\delta_{\M}$ for $\M=\Eu^d$ is a long-standing open problem of discrete geometry with the exact values known only for $d=2$ (\cite{LFT50}), $3$ (\cite{L-05}), $8$ (\cite{V-17}), and $24$ (\cite{CKMRV}). (For more details see for example, Chapter 1 of \cite{Bezdek2013}). The concept of density has been extended to {\it soft packings} by the authors in cite{Be-La-2015} and \cite{Be-La-2024} as follows.  As above, let $\P:=\{\c_i+\MM\ |\ i\in I\subset\N \ {\rm with}\ \|\c_j-\c_k\|_{\M} \ge 2\ {\rm for}\ {\rm all}\ j\neq k\in I\}$ be an arbitrary packing of unit balls in $\M$. Moreover, for any $d\ge 2$  and $\lambda\ge 0$ let $\P_{\lambda}:=\{\c_i+(1+\lambda)\MM\ |\ i\in I\}$  with $\mathbf{P}_{\lambda}:=\bigcup_{i\in I}(\c_i+(1+\lambda)\MM)$ denoting the outer parallel domain of $\mathbf{P}:=\bigcup_{i\in I}(\c_i+\MM)$ for outer radius $\lambda$ in $\M$ (see Figure~\ref{fig:soft_packing}). Furthermore, let
$$\delta^{{\rm soft}}_{\M}(\P_{\lambda}):=\limsup_{R\to +\infty}\frac{\vol_d(\mathbf{P}_{\lambda}\cap R\BB^d)}{\vol_d(R\BB^d) }$$
be the (upper) density of the outer parallel domain $\mathbf{P}_{\lambda}$ assigned to the unit ball packing $\P$ in $\M$. Finally, let
$$\delta^{\rm soft}_{\M}(\lambda):=\sup_{\P}\delta_{\M}(\P_{\lambda})$$
be the largest density of the outer parallel domains of unit ball packings having outer radius $\lambda$ in $\M$. Putting it somewhat differently, one could say that the family $\P_{\lambda}=\{\c_i+(1+\lambda)\MM\ |\ i\in I\}$ of closed balls of radii $1+\lambda$ is a {\it packing of soft balls with soft parameter} $\lambda$  in $\M$, if $\P:=\{\c_i+\MM\ |\ i\in I\}$ is a unit ball packing  of $\M$ in the usual sense. In particular, $\delta^{\rm soft}_{\M}(\P_{\lambda})$ is called the (upper) {\it soft density} of the soft ball packing $\P_{\lambda}$  in $\M$ with
$\delta^{\rm soft}_{\M}(\lambda)$ standing for the {\it largest soft density} of packings of soft balls of radii $1+\lambda$ having soft parameter $\lambda$ in $\M$.

\begin{figure}[ht]
\begin{center}
\includegraphics[width=0.7\textwidth]{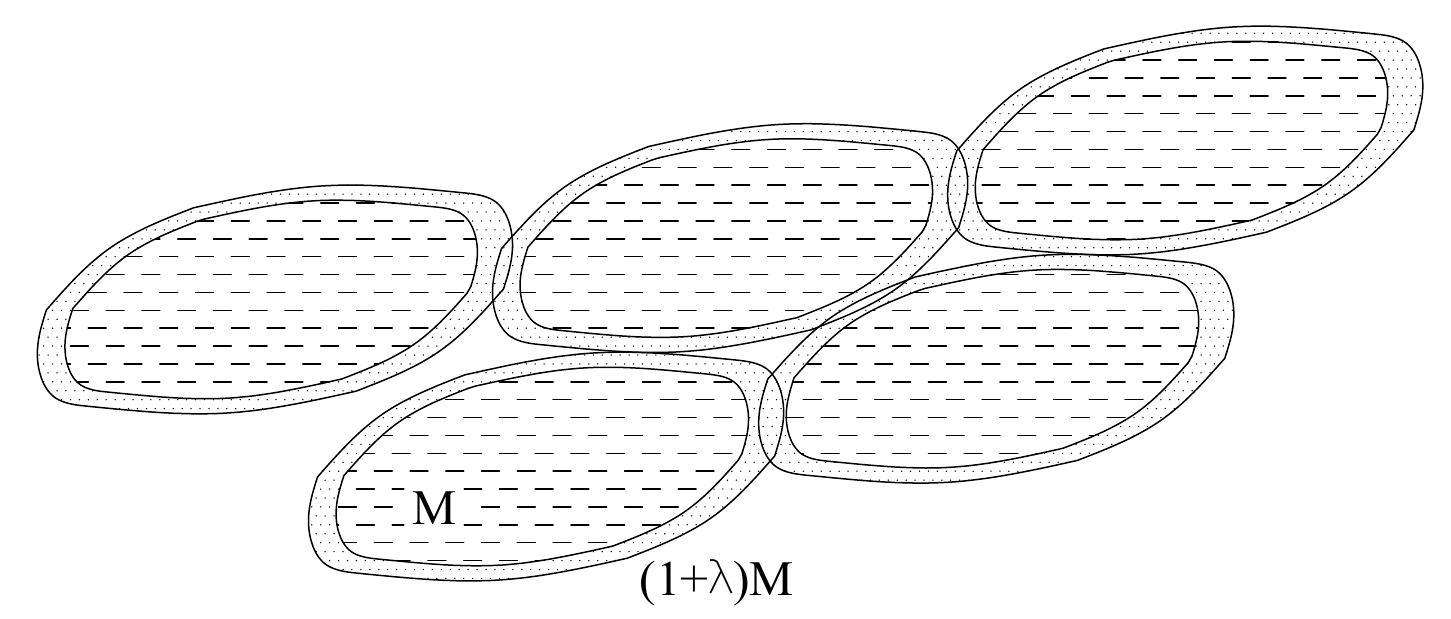}
\caption{A soft packing of translates of the $\o$-symmetric convex body $\MM$. The regions with horizontal stripes and dots correspond to $\mathbf{P}$ and $\mathbf{P}_{\lambda} \setminus \mathbf{P}$, respectively.}
\label{fig:soft_packing}
\end{center}
\end{figure}

The authors proved Rogers-type upper bounds for $\delta^{\rm soft}_{\Eu^d}(\lambda)$ in  \cite{Be-La-2015} (see Theorems 5 and 6 as well as Corollary 1), which are sharp for $d=2$ and $\lambda\geq 0$. On the other hand, the authors proved in \cite{Be-La-2024} (see Theorem 3) that for any $\lambda > 0$, among lattice packings of soft balls of radii $1+\lambda$ with soft parameter $\lambda$ in $\Eu^3$ (for an illustration of a soft lattice packing, see Figure~\ref{fig:soft_packing2}), the soft density has a local maximum at the corresponding FCC lattice.

\begin{figure}[ht]
\begin{center}
\includegraphics[width=0.7\textwidth]{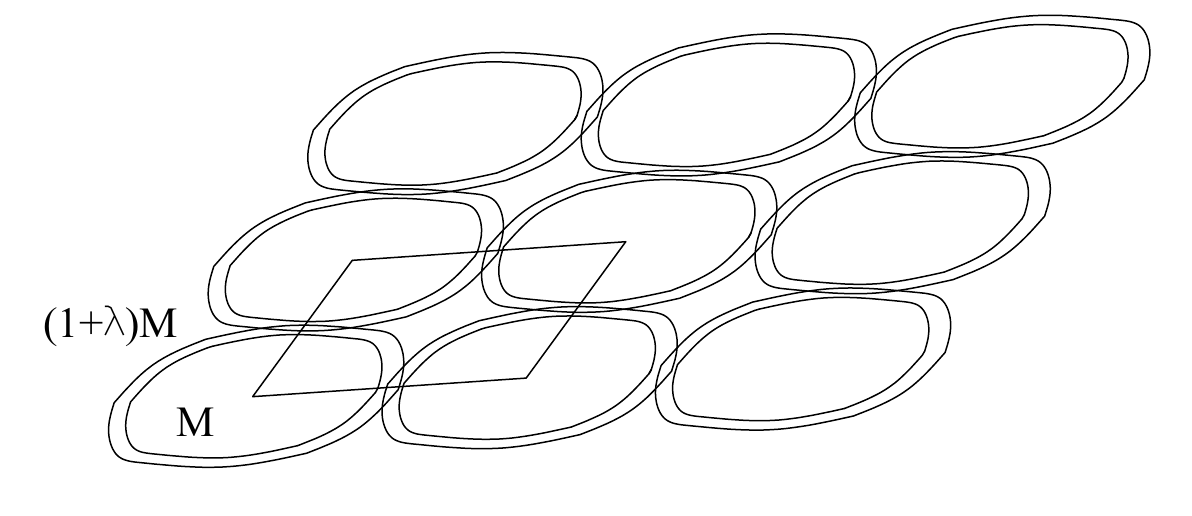}
\caption{A soft lattice packing of translates of the $\o$-symmetric convex body $\MM$, with a fundamental parallelogram of the lattice.}
\label{fig:soft_packing2}
\end{center}
\end{figure}

In this note we continue the line of research presented in  \cite{Be-La-2015} and   \cite{Be-La-2024}, and extend those results further by introducing the concept of {\it truncated density} $\delta_{\M}(\lambda)$ that bridges the concepts of density $\delta_{\M}$ and soft density $\delta^{\rm soft}_{\M}(\lambda)$ as follows.

\begin{definition}\label{defn:main}
Let $\MM$ be an arbitrary $\o$-symmetric convex body in $\Eu^d$, $d>1$ and let $\M$ be the $d$-dimensional Minkowski space (i.e., $d$-dimensional normed space) with $\MM$ as its unit ball, and norm $\| \x \|_{\MM}:=\min\{\mu>0 | \x\in\mu\MM\}$ for any $\x\in\Eu^d$. Then let $\P:=\{\c_i+\MM\ |\ i\in I\subset\N \ {\rm with}\ \|\c_j-\c_k\|_{\M} \ge 2\ {\rm for}\ {\rm all}\ j\neq k\in I\}$ be an arbitrary packing of unit balls in $\M$. Moreover, for any $d\ge 2$  and $\lambda\ge 0$ let $\mathbf{P}_{\lambda}:=\bigcup_{i\in I}(\c_i+(1+\lambda)\MM)$ denote the outer parallel domain of $\mathbf{P}:=\bigcup_{i\in I}(\c_i+\MM)$ having outer radius $\lambda$ in $\M$. Furthermore, let
$${\delta}_{\M}(\lambda , \P):=\limsup_{R\to +\infty}\frac{\vol_d(\mathbf{P}\cap R\BB^d)}{\vol_d(\mathbf{P}_{\lambda}\cap R\BB^d) }$$
be the (upper) {\rm $\lambda$-truncated density} of the unit ball packing $\P$ in $\mathbf{P}_{\lambda}$. Finally, let the largest $\lambda$-truncated density be defined by
$${\delta}_{\M}(\lambda):=\sup_{\P}{\delta}_{\M}(\lambda , \P) ,$$
where the supremum is taken for all packings $\P$ of unit balls in $\M$. Similarly, let $${\delta}_{\M}^{\mathrm{lattice}}(\lambda):=\sup_{\P_{\mathrm{lattice}}}{\delta}_{\M}(\lambda , \P_{\mathrm{lattice}}),$$
where $\P_{\mathrm{lattice}}$ stands for an arbitrary lattice packing of unit balls in $\M$. 
\end{definition}

\begin{remark} It follows from the above definitions in a straightforward way that
\begin{equation}\label{basic relation}
\delta_{\M}(\P)=\delta^{{\rm soft}}_{\M}(\P_{\lambda})\cdot {\delta}_{\M}(\lambda , \P)\ {\rm and}\ \delta_{\M}=\delta^{\rm soft}_{\M}(\lambda)\cdot{\delta}_{\M}(\lambda).
\end{equation}
For given $\o$-symmetric convex body $\MM$ in $\Eu^d$, the smallest $\lambda\geq 0$ for which $\delta^{\rm soft}_{\M}(\lambda)=1$, i.e., for which  $\delta_{\M}={\delta}_{\M}(\lambda)$, is closely related to the notion of simultaneous packing and covering constant of $\MM$, which has been extensively studied by several authors. We refer the interested reader to \cite{Zo2008} and the references mentioned there. Finally, we note that by taking a saturated packing of translates of $\MM$ in $\Eu^d$, it follows that $\delta^{\rm soft}_{\M}(\lambda)=1$ holds for all $\lambda\geq 1$. This together with (\ref{basic relation}) implies that
\begin{equation}\label{first}
\delta_{\M}={\delta}_{\M}(\lambda)
\end{equation}
holds for all $\lambda\geq 1$.
\end{remark}

\begin{remark} Clearly, if $\P:=\{\c_i+\MM\ |\ i=1,2,\dots,n \ {\rm with}\ \|\c_j-\c_k\|_{\M} \ge 2\ {\rm for}\ {\rm all}\ 1\leq j<k\leq n\}$ is a packing of $n\geq 1$ unit balls in $\M$, then $\delta_{\M}(\P)=\delta^{{\rm soft}}_{\M}(\P_{\lambda})=0$ and $ {\delta}_{\M}(\lambda , \P)>0$. In particular, for given $d>1, n>1$ and for any sufficiently small $\lambda>0$ the largest ${\delta}_{\Eu^d}(\lambda , \P)$ can be obtained for a packing $\P$ of $n$ unit balls in $\Eu^d$ that possesses the largest number of touching pairs. (For more details see Theorem 1 in  \cite{Be-La-2015}.)
\end{remark}

 \begin{remark} Clearly, $0<{\delta}_{\M}^{\mathrm{lattice}}(\lambda)\leq {\delta}_{\M}(\lambda)$ and ${\delta}_{\M}(\lambda)$ (resp., ${\delta}_{\M}^{\mathrm{lattice}}(\lambda)$) is a decreasing function of $\lambda\geq 0$ with ${\delta}_{\M}(0)={\delta}_{\M}^{\mathrm{lattice}}(0)=1$.
\end{remark} 

Independently, L. Fejes T\'oth \cite{LFT50} and C.A. Rogers \cite{Rogers} (see also \cite{Rogers2}) proved that 
\begin{equation}\label{second}
{\delta}_{\M}={\delta}^{\rm lattice}_{\M}
\end{equation} 
holds for any $\o$-symmetric convex body $\MM$ in $\Eu^2$, where ${\delta}^{\rm lattice}_{\M}$ denotes the largest (upper) density of lattice packings of $\MM$ in $\Eu^d$. This theorem, i.e., (\ref{second}) is a special case of the following more general result for sufficiently large $\lambda$ in (\ref{third}).
\begin{theorem}\label{thm:lattice}
For any $\o$-symmetric plane convex body $\MM$ and $\lambda\geq 0$, there is a lattice packing $\P_{\mathrm{lattice}}$ of translates of $\MM$ such that
$
\delta_{\M}(\lambda, \P_{\mathrm{lattice}}) = \delta_{\M}(\lambda).
$
In other words, we have
\begin{equation}\label{third}
\delta_{\M}(\lambda) = \delta_{\M}^{\rm lattice}(\lambda)
\end{equation}
for any $\o$-symmetric plane convex body $\MM$ and $\lambda\geq 0$.
\end{theorem}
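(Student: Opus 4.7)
My plan is to extend L.~Fejes T\'oth's classical proof of the identity $\delta_\M = \delta_\M^{\mathrm{lattice}}$ for translative packings to the truncated setting by combining a Minkowski Voronoi decomposition of the packing with a Dowker-type extremal inequality for $\o$-symmetric hexagons. Since every $\o$-symmetric convex hexagon tiles $\Eu^2$ by translation, an extremal hexagon will automatically give rise to a lattice packing attaining the bound.

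For a packing $\P = \{\c_i + \MM\}$ in $\M$, first attach to each center $\c_i$ its Minkowski Voronoi cell
\begin{equation*}
V_i := \{\x \in \Eu^2 : \|\x - \c_i\|_\MM \leq \|\x - \c_j\|_\MM \text{ for all } j \neq i\}.
\end{equation*}
These cells tile $\Eu^2$, and each $V_i$ contains $\c_i + \MM$ by the triangle inequality together with the packing condition $\|\c_i - \c_j\|_\MM \geq 2$. The cells need not be convex in the Minkowski setting, but each $V_i$ is star-shaped with respect to $\c_i$: if $\x \in V_i$ and $\y := (1-t)\c_i + t\x$ satisfied $\|\y - \c_k\|_\MM < \|\y - \c_i\|_\MM = t\|\x - \c_i\|_\MM$, then
\begin{equation*}
\|\x - \c_k\|_\MM \leq \|\x - \y\|_\MM + \|\y - \c_k\|_\MM < (1-t)\|\x - \c_i\|_\MM + t\|\x - \c_i\|_\MM = \|\x - \c_i\|_\MM,
\end{equation*}
contradicting $\x \in V_i$. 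Setting $V_i^\lambda := V_i \cap (\c_i + (1+\lambda)\MM)$, a similar triangle-inequality check gives $V_i^\lambda = V_i \cap \mathbf{P}_\lambda$: if $\x \in V_i \cap (\c_k + (1+\lambda)\MM)$ for some $k$, then $\|\x - \c_i\|_\MM \leq \|\x - \c_k\|_\MM \leq 1+\lambda$. Hence $\{V_i^\lambda\}$ tiles $\mathbf{P}_\lambda$, and
\begin{equation*}
\delta_\M(\lambda, \P) = \limsup_{R \to \infty} \frac{\sum_i \area(\c_i + \MM)}{\sum_i \area(V_i^\lambda)},
\end{equation*}
where both sums are over cells intersecting $R\BB^2$ and boundary contributions are negligible as $R\to\infty$.

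Next I would prove a Dowker-type inequality for truncated polygons. Let $a_{2n}^\lambda(\MM)$ denote the infimum of $\area(P \cap (1+\lambda)\MM)$ over $\o$-symmetric convex $2n$-gons $P \supseteq \MM$. Using an averaging construction on two optimal circumscribed polygons in the spirit of Dowker, I would show that $\{a_{2n}^\lambda(\MM)\}$ is convex in $n$, so that its weighted mean over the cells of a tiling is minimized when every cell is a hexagon. An Euler-characteristic count for the tiling $\{V_i^\lambda\}$ of $\mathbf{P}_\lambda$ shows that, asymptotically, the number of ``bisector'' sides per cell averages at most six, and a local convexification at each vertex of $V_i^\lambda$ yields $\area(V_i^\lambda) \geq a_{2n}^\lambda(\MM)$ for a cell with $n$ bisector-side pairs. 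Combining these gives $\delta_\M(\lambda, \P) \leq \area(\MM)/a_6^\lambda(\MM)$, and the lattice packing whose Voronoi cells are translates of the extremal hexagon $H$ attains this bound, so $\delta_\M(\lambda) = \delta_\M^{\mathrm{lattice}}(\lambda) = \area(\MM)/a_6^\lambda(\MM)$.

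The principal obstacle I expect is the truncated Dowker convexity of $\{a_{2n}^\lambda(\MM)\}$: the classical averaging argument trades portions of sides between two optimal polygons, but any portion of a side lying outside $(1+\lambda)\MM$ contributes nothing to the truncated area, so the averaging step must be refined to track the ``insensitive'' part of the boundary. A secondary technical difficulty is the local convexification of a non-convex vertex of $V_i^\lambda$: one must check that replacing an indentation by a chord tangent to $\c_i + \MM$ stays inside $\c_i + (1+\lambda)\MM$, decreases the area of $V_i^\lambda$, and can be made compatible with the corresponding move in the adjacent cell sharing that boundary piece.
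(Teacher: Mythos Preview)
Your overall architecture---Voronoi decomposition, Euler count to bound the average number of sides by six, Dowker-type convexity, then a lattice from an extremal centrally symmetric hexagon---is exactly the paper's. The essential gap is in the object to which you apply the Dowker step. In a Minkowski plane the Voronoi cell $V_i$ is a \emph{bisector polygon}: its sides are arcs of the curves $B(\c_i,\c_j)$, it is star-shaped but generally not convex, and it is \emph{not} $\o$-symmetric (its number of sides can be odd). Your quantity $a_{2n}^\lambda(\MM)$ is defined only for $\o$-symmetric straight-edged convex polygons, so the comparison $\area(V_i^\lambda)\ge a_{2n}^\lambda(\MM)$ needs both a convexification and a symmetrization of $V_i^\lambda$. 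The ``local convexification'' you describe---replacing an indentation by a chord tangent to $\c_i+\MM$---does not in general produce a convex polygon circumscribed about $\MM$ with at most the same number of sides and at most the same truncated area; bisector arcs can be convex toward $\c_i$ as well as away from it, and truncation by $(1+\lambda)\MM$ makes the area change under chord replacement go either way. Nor is there any step producing central symmetry.

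The paper resolves this by never leaving the category of bisector polygons: it proves the Dowker convexity directly for the minimal $\lambda$-truncated area $A_n(\MM,\lambda)$ of \emph{B-$n$-gons} (intersections of half-planes bounded by bisectors $B(\o,\x_j)$), using a functional inequality on arcs of $\S^1$ parametrizing the touching points (Lemma~\ref{lem:functional}). Then each Voronoi cell is literally a B-$n_i$-gon, so $\area(V_i^\lambda)\ge A_{n_i}(\MM,\lambda)$ is immediate, and convexity plus $\bar n\le 6$ gives the bound $A_6(\MM,\lambda)$. The same Dowker statement also yields, for even $n$, an $\o$-symmetric optimal B-$n$-gon, whose three pairs of bisectors $B(\o,\pm\x)$, $B(\o,\pm\y)$, $B(\o,\pm(\x+\y))$ produce the lattice. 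In short: your convexification/symmetrization detour is the place where the argument would fail; the fix is to run Dowker for bisector polygons rather than for straight-sided ones.
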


Clearly, (\ref{first}), (\ref{second}), and (\ref{third}) imply 

\begin{corollary}\label{2D-lattice}
 ${\delta}_{\M}^{\mathrm{lattice}}(\lambda)={\delta}^{\rm lattice}_{\M}$ holds for all $\lambda\geq 1$ and for all $\o$-symmetric convex body $\MM$ in $\Eu^2$.
 \end{corollary}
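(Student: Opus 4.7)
The plan is to obtain the corollary as a direct consequence of the three labeled equalities already displayed in the excerpt, namely (\ref{first}), (\ref{second}), and (\ref{third}), combined with the trivial one-sided inequality from the third remark. So the proof will essentially be a short chain of equalities, and no new geometric work is required.

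First, I would fix an arbitrary $\o$-symmetric planar convex body $\MM$ and an arbitrary $\lambda\geq 1$. Next, I would invoke (\ref{first}), which was derived via the saturated-packing argument with the observation that $\delta^{\rm soft}_{\M}(\lambda)=1$ holds for every $\lambda\geq 1$; this yields $\delta_{\M}=\delta_{\M}(\lambda)$. Then I would apply Theorem~\ref{thm:lattice}, which is precisely relation (\ref{third}) in dimension two; this gives $\delta_{\M}(\lambda)=\delta_{\M}^{\rm lattice}(\lambda)$. Finally, the classical planar theorem of L.~Fejes T\'{o}th and Rogers, recalled as (\ref{second}), yields $\delta_{\M}=\delta_{\M}^{\rm lattice}$.

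Chaining these three equalities, I obtain
\begin{equation*}
\delta_{\M}^{\rm lattice}(\lambda)\;=\;\delta_{\M}(\lambda)\;=\;\delta_{\M}\;=\;\delta_{\M}^{\rm lattice},
\end{equation*}
which is exactly the asserted identity $\delta_{\M}^{\mathrm{lattice}}(\lambda)=\delta_{\M}^{\rm lattice}$. As a sanity check I would note that the consistency inequality $\delta_{\M}^{\mathrm{lattice}}(\lambda)\leq\delta_{\M}(\lambda)$ recorded in the third remark is automatically respected, so no circularity is introduced. The chain works for every $\o$-symmetric planar $\MM$ and every $\lambda\geq 1$, as required.

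The only real content sitting behind this short derivation is that (\ref{second}) and (\ref{third}) are both highly nontrivial, but since the statement of the corollary is being proved \emph{after} Theorem~\ref{thm:lattice}, both may be used as black boxes. Thus there is no serious obstacle; the main thing to be careful about is simply that $\lambda\geq 1$ is exactly the threshold that makes $\delta^{\rm soft}_{\M}(\lambda)=1$ and hence justifies stepping from (\ref{basic relation}) to (\ref{first}).
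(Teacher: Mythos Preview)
Your proof is correct and follows exactly the route indicated in the paper, which simply states that the corollary is an immediate consequence of (\ref{first}), (\ref{second}), and (\ref{third}); your chain $\delta_{\M}^{\rm lattice}(\lambda)=\delta_{\M}(\lambda)=\delta_{\M}=\delta_{\M}^{\rm lattice}$ is precisely the intended one-line derivation.
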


\begin{remark}
Theorem~\ref{thm:lattice} for $\M=\Eu^2$, has already been proved in \cite{Be-La-2015} as follows. Let $0<\lambda<\frac{2}{\sqrt{3}}-1=0.1547...$ and $\mathbf{H}$ be a regular hexagon circumscribed about the unit disk $\BB^2$ in $\Eu^2$. Then
\begin{equation}\label{fourth}
{\delta}_{\Eu^2}(\lambda)=\frac{\pi}{\mathrm{area}\left(\mathbf{H}\cap(1+\lambda)\BB^2\right)}={\delta}_{\Eu^2}^{\mathrm{lattice}}(\lambda),
\end{equation}
where $\mathrm{area}(\cdot):=\vol_2(\cdot)$.
\end{remark}

Clearly, it is sufficient to prove Theorem~\ref{thm:lattice} under the assumption that $\MM$ has $C^{\infty}$-class boundary and it is strictly convex. We note that in this case the bisector of any nondegenerate segment $[\x, \y]$ is homeomorphic to a line (\cite{AKL}). We denote the bisector of $[\x,\y]$ by $B(\x,\y)$.

\begin{definition}
Let $\x_1,\x_2,\ldots, \x_k \in \M$ satisfy $||\x_i||_{\M} \geq 2$ for all $i$ with $1\leq i\leq k$. Then the set
\[
\PP = \{ \x : \|\x\|_{\M}  \leq \|\x-\x_i\|_{\M} \hbox{ for all } i \hbox{ with }1\leq i\leq k\} 
\]
is called a \emph{bisector $k$-gon} (shortly, B-$k$-gon) of $\MM$. For any $\lambda > 0$, we call the area of $\PP \cap (1+\lambda) \MM$ the \emph{$\lambda$-truncated area} of $\PP$.
\end{definition}

The proof of Theorem~\ref{thm:lattice} relies on the following Dowker-type theorem.

\begin{theorem}\label{thm:Dowker}
For any $\lambda > 0$ and $n \geq 3$, let $A_n(\MM,\lambda)$ denote the infimum of the $\lambda$-truncated areas of all B-$n$-gons of $\MM$. Then the sequence $\{ A_n(\MM,\lambda) \}$ is convex. Furthermore, if $k$ is a divisor of $n$ and $\MM$ has $k$-fold rotational symmetry, then there is a B-$n$-gon of $\MM$ with $k$-fold rotational symmetry whose $\lambda$-truncated area is $A_n(\MM,\lambda)$. In particular, if $n$ is even, there is an $\o$-symmetric B-$n$-gon whose $\lambda$-reduced area is $A_n(\MM,\lambda)$.
\end{theorem}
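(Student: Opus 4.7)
The plan is to adapt the classical Dowker argument of L.~Fejes T\'oth to bisector polygons in the Minkowski plane with the $\lambda$-truncated area. I would proceed in four steps.

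First, as already noted in the excerpt, I may assume $\MM$ has smooth, strictly convex boundary, so that every bisector $B(\o,\x)$ is a simple curve homeomorphic to a line. For each $n \ge 3$, I would establish existence of a minimizer among $B$-$n$-gons by compactness and lower semicontinuity, after one reduction: in any minimizer the constraints $\|\x_i\|_{\M}\ge 2$ must be active, because decreasing $\|\x_i\|_{\M}$ translates $B(\o,\x_i)$ strictly toward $\o$ and hence strictly shrinks $\PP\cap(1+\lambda)\MM$. Thus admissible configurations correspond to $n$-tuples on the compact set $2\,\partial\MM$, and a minimizer $\PP_n^{*}$ with contact points $\z_i:=\x_i/2\in\partial\MM$ exists.

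Second, to prove $2A_n(\MM,\lambda) \le A_{n-1}(\MM,\lambda)+A_{n+1}(\MM,\lambda)$ I would run a Dowker exchange. Take minimizers $\PP_{n-1}^{*}$ and $\PP_{n+1}^{*}$ with cyclically ordered contact points $\z_1,\dots,\z_{n-1}$ and $\y_1,\dots,\y_{n+1}$ on $\partial\MM$. By a pigeonhole argument on the merged cyclic sequence, some gap $(\z_i,\z_{i+1})$ contains two consecutive $\y$-points, say $\y_{j-1},\y_j$. Form the $B$-$n$-gon $\PP$ by adjoining the defining point $2\y_j$ to those of $\PP_{n-1}^{*}$, and the $B$-$n$-gon $\PP'$ by removing $2\y_j$ from those of $\PP_{n+1}^{*}$. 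Since $A_n(\MM,\lambda)\le \area(\PP\cap(1+\lambda)\MM)$ and $A_n(\MM,\lambda)\le \area(\PP'\cap(1+\lambda)\MM)$, it suffices to prove the rearrangement inequality
\[
\area\bigl(\PP\cap(1+\lambda)\MM\bigr)+\area\bigl(\PP'\cap(1+\lambda)\MM\bigr) \;\le\; A_{n-1}(\MM,\lambda)+A_{n+1}(\MM,\lambda),
\]
which in turn gives $2A_n\le A_{n-1}+A_{n+1}$.

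Third, the main obstacle is establishing this rearrangement inequality. It is equivalent to showing that the truncated area \emph{gained} by deleting the bisector $B(\o,2\y_j)$ from $\PP_{n+1}^{*}$ is at most the truncated area \emph{lost} by adjoining it to $\PP_{n-1}^{*}$; geometrically, the sliver cut off $\PP\cap(1+\lambda)\MM$ by inserting a new bisector into a gap on $\partial\MM$ should be larger when the gap is wider. To verify this I would, for each pair of consecutive contact points $\q_1,\q_2\in\partial\MM$, analyze the angular sector $S(\q_1,\q_2)$ of the truncated cell bounded by the two rays from $\o$ through $\q_1$ and $\q_2$, and show that inserting a third contact point $\y$ on the arc $(\q_1,\q_2)$ strictly decreases $\area(S(\q_1,\q_2))$ by an amount that is monotone nondecreasing as the arc widens (with $\y$ held in fixed relative position). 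Two qualitative regimes must be handled together, according to whether the relevant portion of $B(\o,2\y)$ stays inside $(1+\lambda)\MM$ or exits through $\partial\bigl((1+\lambda)\MM\bigr)$; the interaction between the bisector geometry of $\M$ and the truncating boundary $\partial\bigl((1+\lambda)\MM\bigr)$ is precisely what distinguishes this setting from the classical Dowker problem, and this is where the bulk of the technical work lies.

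Finally, for the symmetrization statement, suppose $k\mid n$ and $\MM$ is invariant under the cyclic rotation group $C_k=\langle\rho\rangle$. A minimizer $\PP_n^{*}$ and its rotates $\rho^i\PP_n^{*}$, $i=0,\dots,k-1$, are all minimizers whose contact points are permuted by $\rho$. I would produce a $C_k$-invariant minimizer by a group-averaging argument combined with the exchange inequality of Step~2 applied symmetrically within each $C_k$-orbit of contact points, collapsing the $kn$ contact points of all rotates into $n/k$ orbits of size $k$ without increasing the truncated area. The $\o$-symmetric case for even $n$ is then the special case $k=2$, which is always available since $\MM=-\MM$.
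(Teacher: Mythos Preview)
Your plan follows the classical direct Dowker route; the paper takes a shorter and structurally different path. After noting that every side of a minimizer touches $\MM$, the paper parametrizes sides by outer unit normals $x\in\mathbb{S}^1$ at the contact points $\p_x\in\partial\MM$, and to each arc $\widehat{xy}\subset\mathbb{S}^1$ assigns the truncated ``corner area'' $A(\widehat{xy}):=\area\bigl((1+\lambda)\MM\cap R(\widehat{xy})\bigr)$, where $R(\widehat{xy})$ is the region outside $\MM$ bounded by the boundary arc from $\p_x$ to $\p_y$ and the two tangent bisectors $B(\o,2\p_x)$, $B(\o,2\p_y)$. Then minimizing the $\lambda$-truncated area over $B$-$n$-gons becomes minimizing $\sum_i A(\widehat{x_ix_{i+1}})$ over $n$-tilings of $\mathbb{S}^1$, and a packaged lemma (Lemma~5 of \cite{BL23}) delivers both the convexity of $\{A_n\}$ and the rotational-symmetry statement at once, provided the single submodularity inequality
\[
A(\widehat{x_1x_3})+A(\widehat{x_2x_4}) \le A(\widehat{x_1x_4})+A(\widehat{x_2x_3}),\qquad \widehat{x_2x_3}\subset\widehat{x_1x_4},
\]
holds. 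The paper checks this \emph{pointwise} on indicator functions, $I[R(\widehat{x_1x_4})]+I[R(\widehat{x_2x_3})]\ge I[R(\widehat{x_1x_3})]+I[R(\widehat{x_2x_4})]$ everywhere in $(1+\lambda)\MM$. That one line replaces all of the case analysis you anticipate in Step~3 (the truncating boundary $\partial((1+\lambda)\MM)$ plays no special role, since the pointwise inequality is intersected with it afterward), and it also makes your separate Step~4 unnecessary.

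Two remarks on your own outline. In Step~3, the ``angular sector bounded by rays from $\o$'' is not the right local object: the slivers are bounded by bisector curves, and the decomposition that makes the comparison local is precisely the corner-area decomposition above rather than a radial one. In Step~2, the global rearrangement inequality you write does not follow from your pigeonhole as stated: removing $\y_j$ from $\PP_{n+1}^{*}$ adds a sliver bounded (near $\y_j$) by $B(\o,2\y_{j-1})$ and $B(\o,2\y_{j+1})$, while inserting $\y_j$ into $\PP_{n-1}^{*}$ removes a sliver bounded by $B(\o,2\z_i)$ and $B(\o,2\z_{i+1})$; your pigeonhole places $\y_{j-1},\y_j$ in the $\z$-gap but says nothing about $\y_{j+1}$, and away from this gap the $\z$'s and $\y$'s are unrelated, so the desired containment of slivers is not immediate. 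Passing to corner areas dissolves this difficulty, because the comparison becomes exactly the four-point inequality above.
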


Next, we state a strengthening of (\ref{fourth}) as follows.
Let $\PPP$ be a plane of constant curvature (i.e., let $\PPP$ be either the Euclidean, or spherical, or hyperbolic plane), and let $\F$ be a packing of disks of radius $r$ in $\PPP$, Let $\BB$ be an element of $\F$ centered at $\q$, and let $\VV$ be the Voronoi cell of $\BB$. On the sphere, we also assume that the distance of any vertex of $\VV$ from $\q$ is less than $\frac{\pi}{2}$. Let $\BB_{\lambda}$ denote the disk of radius $(1+\lambda) r$ concentric with $\BB$. Then let the density of $\BB$ with respect to the truncated Voronoi cell $\BB_{\lambda} \cap \VV$ \big(resp., the soft density of the soft disk $\BB_{\lambda}$ with respect to $\VV$ having soft parameter $\lambda$ and hard core $\BB$\big) be defined by 
\[
 \delta(\BB,\VV,\lambda) := \frac{\area(\BB)}{\area(\BB_{\lambda} \cap \VV)}, \left(\mathrm{resp.,\ } \bar{\delta} (\BB,\VV,\lambda) := \frac{\area(\BB_{\lambda} \cap \VV)}{\area(\VV)} \right).
\]
For any $0 < r$ (on the sphere we also assume that $r < \frac{\pi}{3}$), let $R(r)$ denote the circumradius of a regular triangle $\TT_r$ of sidelengths $2r$. (Note that our assumption implies that on the sphere $R(r) < \frac{\pi}{2}$.)
Furthermore, if the vertices of $\TT_r$ are $\p_1,\p_2,\p_3$, and for $i=1,2,3$, $\BB_i$ and $\BB_i'$ denote the disks of radius $r$ and $(1+\lambda)r$ centered at $\p_i$, respectively, then we set
\[
\sigma_{reg}(r) := \frac{\area \left( \bigcup_{i=1}^3 \BB_i \cap \TT_r \right)}{\area\left( \bigcup_{i=1}^3 \BB_i' \cap \TT_r \right)},
\left(\mathrm{resp., \ } \bar{\sigma}_{reg}(r) = \frac{\area\left( \bigcup_{i=1}^3 \BB_i' \cap \TT_r \right)}{\area\left( \TT_r \right)}\right).
\]
We note that the distance of any sideline of $\VV$ from $\q$ is at least $r$, and the distance of any vertex of $V$ from the center of the disk of radius $r$ contained in $\VV$ is at least $R(r)$.

\begin{theorem}\label{thm:planeVoronoi}
With the above notation, we have
\[
\delta(\BB,\VV,\lambda) \leq \sigma_{reg}(r) {\mathrm{\ and\ }} \bar{\delta}(\BB,\VV,\lambda) \leq \bar{\sigma}_{reg}(r).
\]
\end{theorem}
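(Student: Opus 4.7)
The plan is to establish both inequalities by a Fejes T\'oth--Moln\'ar style decomposition of $\VV$ into right triangles sharing the vertex $\q$, followed by a per-triangle extremal comparison against the canonical right triangle from the regular configuration, and finally summation. For each side $s_j$ of $\VV$, drop the perpendicular from $\q$ with foot $\m_j$; joining $\q$ to every vertex of $\VV$ and to every $\m_j$ partitions $\VV$ into $2k$ right triangles $T_j$, each with vertex angle $\alpha_j$ at $\q$ (so $\sum_j \alpha_j = 2\pi$), perpendicular leg $d_j = |\q\m_j| \ge r$, and third vertex (a vertex of $\VV$) at distance $R_j \ge R(r)$ from $\q$. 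On the sphere, the hypothesis $R(r) < \pi/2$ keeps this decomposition inside a hemisphere. Denote by $T^*$ the canonical right triangle of the regular configuration, a right half of one of the six congruent pieces into which the medians cut $\TT_r$, with $\q$-angle $\pi/6$, perpendicular leg exactly $r$, and third vertex at distance exactly $R(r)$. By the symmetries of $\TT_r$,
\[
\sigma_{reg}(r) = \frac{\area(\BB_1 \cap T^*)}{\area(\BB_1' \cap T^*)}, \qquad \bar\sigma_{reg}(r) = \frac{\area(\BB_1' \cap T^*)}{\area(T^*)}.
\]

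Since $d_j \ge r$, the set $\BB \cap T_j$ is the circular sector at $\q$ of angle $\alpha_j$ and radius $r$, whose area is linear in $\alpha_j$ in each of the three geometries of constant curvature. Hence $\area(\BB) = \sum_j \area(\BB \cap T_j) = 12\,\area(\BB_1\cap T^*)$ via $\sum \alpha_j = 2\pi = 12(\pi/6)$. The heart of the proof is the pair of per-triangle extremal inequalities
\[
\frac{\area(\BB_\lambda \cap T_j)}{\alpha_j} \ge \frac{\area(\BB_1' \cap T^*)}{\pi/6} \quad\text{and}\quad \frac{\area(\BB_\lambda \cap T_j)}{\area(T_j)} \le \frac{\area(\BB_1' \cap T^*)}{\area(T^*)}.
\]
Summing the first over $j$ and dividing by $\area(\BB)$ yields $\delta(\BB,\VV,\lambda) \le \sigma_{reg}(r)$; summing the second and dividing by $\area(\VV)$ yields $\bar\delta(\BB,\VV,\lambda) \le \bar\sigma_{reg}(r)$.

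To establish each per-triangle inequality, fix the angle $\alpha_j = \alpha$ and view $\area(\BB_\lambda \cap T_j)$ and $\area(T_j)$ as functions of the single remaining parameter $d_j = d$, since $R_j = d/\cos\alpha$ and the constraints $d_j \ge r$, $R_j \ge R(r)$ jointly become $d \ge \max(r, R(r)\cos\alpha)$. Enlarging the triangle by increasing $d$ grows both $T$ and $\BB_\lambda \cap T$, but grows $T$ strictly faster whenever the triangle's sides already cut $\BB_\lambda$. Hence the first ratio is minimized, and the second maximized, at the smallest admissible value $d_0(\alpha) := \max(r, R(r)\cos\alpha)$: for $\alpha \ge \pi/6$ this equals $r$ (so $R = r/\cos\alpha$); for $\alpha \le \pi/6$ it equals $R(r)\cos\alpha$ (so $R = R(r)$); both regimes coincide at $\alpha = \pi/6$ with $T = T^*$. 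A one-variable analysis in $\alpha$ using the appropriate sector area formulas in each geometry then shows that the resulting boundary value of $\area(\BB_\lambda \cap T)/\alpha$ attains its minimum, and that of $\area(\BB_\lambda \cap T)/\area(T)$ attains its maximum, at $\alpha = \pi/6$.

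The main obstacle is precisely this final one-variable monotonicity analysis, which must be carried out in each of the three geometries and across the subcases dictated by the relative sizes of $(1+\lambda)r$, $d$, and $R(r)$. The Euclidean case reduces to elementary integrals of $\sec^2\theta$ and constants whose sign can be read off directly; the spherical and hyperbolic analogues produce integrals involving $\cos$ and $\cosh$ terms whose sign analysis is more delicate. A small consistency check at the transition $\alpha = \pi/6$, where the two regimes for $d_0(\alpha)$ meet, is also needed to confirm that the piecewise extremum is actually attained there.
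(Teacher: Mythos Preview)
Your overall strategy---decompose $\VV$ into triangles at $\q$ and bound each triangle against the canonical one $T^*$---is exactly the paper's plan. But two genuine gaps remain, and the paper's proof shows how to close them without the case analysis you anticipate.

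First, the decomposition into $2k$ \emph{right} triangles is not always available: the foot $\m_j$ of the perpendicular from $\q$ to the line of a side $s_j$ need not lie on $s_j$. The paper therefore only subdivides when the foot does lie inside the side, and otherwise keeps the (obtuse) triangle $\conv\{\q,v,v'\}$ intact; its Lemma~3.1 is stated for triangles $\TT(s_1,s_2)$ with $0<s_1<s_2$ precisely to cover this case.

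Second, and more seriously, you explicitly leave the ``one-variable analysis in $\alpha$'' undone, and your reduction to it already uses the Euclidean relation $R_j=d_j/\cos\alpha_j$ (hence ``the two regimes coincide at $\alpha=\pi/6$''), which fails on $\S^2$ and $\mathbb{H}^2$, where the correct right-triangle relations are $\cos\alpha=\tan d/\tan R$ and $\cos\alpha=\tanh d/\tanh R$. The paper avoids this entirely by a synthetic two-step monotonicity argument that works verbatim in all three geometries: (i) for triangles with a fixed perpendicular foot $\p$ on a line through $\q$, the ratios $\rho$ and $\hat\rho$ are non-increasing in the distances $s_1,s_2$ of the other two vertices from $\p$ (proved by rotating equal-angle slivers about $\q$ and observing nesting); this lets you shrink each triangle to a right triangle with hypotenuse exactly $R(r)$; (ii) among right triangles with fixed hypotenuse $R(r)$ from $\q$, $\rho$ and $\hat\rho$ are non-increasing in the perpendicular leg $d\in[r,R(r))$ (proved by intersecting the two triangles and comparing the three resulting pieces via (i)); this lets you push $d$ down to $r$, landing on $T^*$. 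No trigonometric formulas, no subcases in $\lambda$, and no separate treatment of the three geometries are needed.
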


\begin{remark}
We note that the first inequality of Theorem~\ref{thm:planeVoronoi} is a strengthening of (\ref{fourth}) in the Euclidean case and an extension of it to the spherical (resp., hyperbolic) plane. On the other hand, the second inequality of Theorem~\ref{thm:planeVoronoi} is an extension of the $2$-dimensional case of Theorem 5 in \cite{Be-La-2015} to planes of constant curvature.
\end{remark}

It would be very interesting to find analogues of the above stated $2$-dimensional results in higher dimensions. Here we state the following result in $\Eu^3$. Let $\DD$ be a regular dodecahedron circumscribed about the unit ball $\BB^3$, and note that the circumradius of $\DD$ is $R = \sqrt{3} \tan \frac{\pi}{5} = 1.2584\ldots$, and the radius of the midscribed ball of $\DD$ (touching the edges of $\DD$ at their midponts) is $r_{mid} = \frac{\sqrt{10-2\sqrt{5}}}{2} = 1.1755\ldots$.
For any $0 < \lambda \leq R-1$, let
\[
\tau_{\Eu^3}(\lambda) := \frac{\vol_3(\BB^3)}{\vol_3(\DD \cap (1+\lambda) \BB^3)}.
\]
One might wonder (see Problem 4 in \cite{Be-La-2015}) whether for any $0 < \lambda \leq R-1$, if $\VV$ is the Voronoi cell of $\BB^3$ in an arbitrary unit ball packing of $\Eu^3$, then
\[
\frac{\vol_{3}(\BB^3)}{\vol_3(\VV \cap (1+\lambda) \BB^3)} \leq \tau_{\Eu^3}(\lambda).
\]

\begin{theorem}\label{truncated dodecahedron}
For any $0 < \lambda \leq r_{mid}-1$, if $\VV$ is a Voronoi cell of $\BB^3$ in an arbitrary unit ball packing of $\Eu^3$, then
\[
\frac{\vol_3(\BB^3)}{\vol_3( \VV \cap (1+\lambda) \BB^3)} \leq \tau_{\Eu^3}(\lambda).
\]
\end{theorem}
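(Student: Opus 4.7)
Setting $B := (1+\lambda)\BB^3$, and noting that $\vol_3(\BB^3)$ is a fixed positive constant, the stated inequality rearranges into the volume comparison $\vol_3(\VV \cap B) \geq \vol_3(\DD \cap B)$, equivalently the deficit inequality $\vol_3(B \setminus \VV) \leq \vol_3(B \setminus \DD)$. My first step is to evaluate the right-hand side exactly. Since $1+\lambda \leq r_{mid}$, the ball $B$ is contained in the midscribed ball of $\DD$ and therefore meets no edge of $\DD$; hence the twelve spherical caps cut off from $B$ by the twelve face-planes of $\DD$ (each at distance $1$ from the origin) are pairwise disjoint. Each such cap has volume
\[
V_c := \frac{\pi \lambda^2 (3+2\lambda)}{3},
\]
and consequently $\vol_3(B \setminus \DD) = 12\, V_c$.

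For a general Voronoi cell $\VV$, each face $F_i$ has outer unit normal $\mathbf{n}_i$ and lies at distance $d_i \geq 1$ from origin (since $\BB^3 \subseteq \VV$), and the corresponding half-space cuts off from $B$ the cap $C_i$ of volume $\frac{\pi(1+\lambda-d_i)^2(2(1+\lambda)+d_i)}{3}$ when $d_i < 1+\lambda$, and zero otherwise. This cap volume is decreasing in $d_i$, so each $\vol_3(C_i) \leq V_c$; moreover $B \setminus \VV = \bigcup_i C_i$, so subadditivity yields
\[
\vol_3(B \setminus \VV) \;\leq\; \sum_{i\colon d_i < 1+\lambda} \vol_3(C_i) \;\leq\; N \cdot V_c,
\]
where $N$ is the number of faces of $\VV$ with $d_i < 1+\lambda$.

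Combining the two steps reduces the theorem to the combinatorial bound $N \leq 12$. Each relevant face corresponds to a neighbor center $\mathbf{c}_i := 2 d_i \mathbf{n}_i$ in the underlying packing, with $\|\mathbf{c}_i\| < 2(1+\lambda) \leq 2 r_{mid}$, and the packing constraint $\|\mathbf{c}_i - \mathbf{c}_j\| \geq 2$ translates into the angular lower bound $\cos \theta_{ij} \leq (d_i^2 + d_j^2 - 1)/(2 d_i d_j)$ on $\theta_{ij} := \angle(\mathbf{n}_i, \mathbf{n}_j)$. Hence it suffices to show: in any unit ball packing of $\Eu^3$, a given ball has at most twelve other centers within open distance $2 r_{mid}$.

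The hard part is precisely this last combinatorial bound. The classical kissing number theorem yields the estimate when every neighbor sits at distance exactly $2$, but extending it up to distance $2 r_{mid}$ is delicate: a naive spherical cap packing around the directions $\mathbf{n}_i \in S^2$ fails even at the kissing regime, since a cap of angular radius $\pi/6$ occupies less than $1/12$ of $S^2$. My plan to overcome this is to exploit the narrow range $d_i \in [1, r_{mid}]$ and either invoke a refinement of the Sch\"utte--van der Waerden argument for the kissing number, reinterpreted as an upper bound on the number of points in the spherical shell of inner radius $2$ and outer radius $2 r_{mid}$ with pairwise distance at least $2$, or replace the blanket estimate $\vol_3(C_i) \leq V_c$ by a finer weighted bound that also varies with $d_i$ and can be summed using the above angular constraints. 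Equality throughout is attained by $\VV = \DD$, with the twelve kissing neighbors sitting at the face centers of $\DD$, which confirms the sharpness of $\tau_{\Eu^3}(\lambda)$.
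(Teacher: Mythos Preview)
Your reduction to the deficit inequality $\vol_3(B\setminus\VV)\le\vol_3(B\setminus\DD)=12V_c$ and the subadditivity bound $\vol_3(B\setminus\VV)\le\sum_i\vol_3(C_i)$ are both correct, and they match the paper's opening moves. The genuine gap is the step $N\le 12$. This claim is \emph{false} over most of the range you need it for. Indeed, the optimal Tammes configuration of $13$ points on $S^2$ has minimum angular separation about $57.14^\circ$; placing these $13$ centers on a sphere of radius roughly $2.092$ gives pairwise distances $\ge 2$ and distance $\approx 2.092$ from the origin. Hence for any $\lambda>0.046$ one can build a unit ball packing in which the central ball has $13$ neighbours at distance below $2(1+\lambda)$, so $N=13$ is possible throughout the interval $(0.046,\,r_{mid}-1]$. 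Your proposed rescue via Sch\"utte--van der Waerden cannot succeed, because the statement you would need is simply not true.

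What the paper does instead is precisely your vaguely-stated fallback ``finer weighted bound,'' and the missing idea is \emph{convexity} of the cap volume. Writing $h_i:=1+\lambda-d_i\in[0,\lambda]$ for each contributing face, one has $\vol_3(C_i)=f(h_i)$ with $f(h)=\pi\bigl((1+\lambda)h^2-\tfrac{1}{3}h^3\bigr)$, which is strictly increasing \emph{and convex} on $[0,\lambda]$. The key external input is a linear-programming inequality of Hales: for points $\x_i\in\Eu^3$ with $2\le\|\x_i\|\le 2\alpha_0$ (where $\alpha_0=1.26>r_{mid}$) and pairwise distances $\ge 2$, one has $\sum_i\frac{\alpha_0-\|\x_i\|/2}{\alpha_0-1}\le 12$. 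Applied to the neighbouring centers this rearranges to $\sum_i h_i\le 12\lambda$ (the case $m\le 12$ being trivial from $h_i\le\lambda$). Now convexity of $f$ on $[0,\lambda]$ with $f(0)=0$ forces, under the constraint $\sum h_i\le 12\lambda$, the extremal bound
\[
\sum_i f(h_i)\;\le\;12\,f(\lambda)\;=\;12V_c,
\]
since the maximum of a sum of convex functions under a linear constraint and box constraints is achieved at an extreme point (here: twelve heights equal to $\lambda$ and the rest zero). This replaces your counting bound $N\le 12$ by the weighted bound $\sum h_i\le 12\lambda$, which is both true and exactly strong enough.
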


\begin{remark}
On the one hand, we note that $\tau_{\Eu^3}(\lambda)=\frac{4\pi}{4\pi(1+\lambda)^3-36\pi(\lambda^2+\frac{2}{3}\lambda^3)}$ for all $0 < \lambda \leq r_{mid}-1=0.1755\dots$. On the other hand, Theorem 4 of \cite{Be-La-2015} proves the following: Let $0<\lambda\leq\frac{2}{\sqrt{3}}-1=0.1547\dots$ and set $\psi_0:=-\arctan\left(\sqrt{\frac{2}{3}}\tan(5\phi_0)\right)=0.0524\dots$, where $\phi_0:=\arctan\frac{1}{\sqrt{2}}=0.6154\dots$. If $\VV$ is a Voronoi cell of $\BB^3$ in an arbitrary unit ball packing of $\Eu^3$, then
\[
\frac{\vol_3(\BB^3)}{\vol_3(\VV \cap (1+\lambda) \BB^3)}
\] 

\[
\leq  \hat{\tau}_{\Eu^3}(\lambda):=\frac{\pi-6\psi_0}{\pi-6\psi_0+(3\pi-18\psi_0)\lambda-(6\pi+18\psi_0)\lambda^2-(5\pi+6\psi_0)\lambda^3}.
\]
(Unfortunately, the formula for $\hat{\tau}_{\Eu^3}(\lambda)$ in Theorem 4 of \cite{Be-La-2015} was computed incorrectly.) As $ \tau_{\Eu^3}(\lambda)<\hat{\tau}_{\Eu^3}(\lambda)$ holds for all $0<\lambda\leq\frac{2}{\sqrt{3}}-1=0.1547\dots$ therefore Theorem~\ref{truncated dodecahedron} of this paper improves Theorem 4 of \cite{Be-La-2015}.
\end{remark}

\begin{remark}
We note that the proof of Theorem 3 in \cite{Be-La-2024} implies in a straightforward way the following statement. For any $\lambda>0$, among lattice packings of unit balls in $\Eu^3$, the $\lambda$-truncated density has a local maximum at the corresponding FCC lattice.  
\end{remark}

We cannot resists to raise the following problem.

\begin{conjecture}\label{truncated Kepler}
For all $\lambda>0$, among packings of unit balls in $\Eu^3$, the $\lambda$-truncated density has a maximum at the corresponding FCC lattice.
\end{conjecture}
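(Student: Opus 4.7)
The plan splits into two regimes. For $\lambda \geq 1$, equation~(\ref{first}) gives $\delta_{\Eu^3}(\lambda) = \delta_{\Eu^3}$, so the conjecture reduces immediately to the classical Kepler conjecture (Hales), with FCC achieving equality. The content of the conjecture therefore lies entirely in the range $0 < \lambda < 1$.

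For $0 < \lambda < 1$ I would begin with a Voronoi decomposition. Writing $\mathbf{R}_{\mathrm{FCC}}$ for the rhombic dodecahedron Voronoi cell of the FCC lattice, and using the fact that a point of $\mathbf{P}_\lambda$ lying in the Voronoi cell $\VV_i$ of $\c_i$ must lie in $\c_i + (1+\lambda)\BB^3$, the truncated density of a packing $\P = \{\c_i + \BB^3\}$ rewrites as
\[
\delta_{\Eu^3}(\lambda, \P) = \limsup_{R\to\infty} \frac{\vol_3(\BB^3) \cdot N_R}{\sum_i \vol_3\bigl(\VV_i \cap (\c_i + (1+\lambda)\BB^3)\bigr)},
\]
where $N_R$ counts centers in $R\BB^3$. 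The conjectured FCC bound is equivalent to the averaged lower bound $\sum_i \vol_3(\VV_i \cap (1+\lambda)\BB^3) \geq N_R \cdot \vol_3(\mathbf{R}_{\mathrm{FCC}} \cap (1+\lambda)\BB^3)$. This inequality cannot hold pointwise: Theorem~\ref{truncated dodecahedron} gives the sharper local bound $\vol_3(\VV_i \cap (1+\lambda)\BB^3) \geq \vol_3(\DD \cap (1+\lambda)\BB^3)$, and for the hypothetical dodecahedral Voronoi cell this is strictly less than the FCC value, because the dodecahedral configuration is locally denser than FCC but does not tile space. The FCC bound must therefore arise from a global tiling constraint: any deviation from FCC in one cell has to be compensated by adjacent cells being ``larger'' than FCC, with the net sum nonnegative.

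My strategy would be to import the Hales--Ferguson machinery from the proof of the Kepler conjecture: design a local score function $\sigma_\lambda$ on stars of neighbouring centers so that $\sum_i \sigma_\lambda(\mathrm{star}_i) \leq 0$ (with equality exactly on FCC stars) and so that $\sigma_\lambda$ bounds the truncated-volume deficit of each star relative to FCC, then rule out the remaining finite-dimensional configuration space by linear-programming plus interval-arithmetic estimates. The main obstacle is twofold. First, Hales' score function is designed to control the total Voronoi volume $\vol_3(\VV_i)$ (the $\lambda = \infty$ case), whereas here it must control the truncated volume $\vol_3(\VV_i \cap (1+\lambda)\BB^3)$, so the underlying deformation and convexity inequalities have to be rederived in the truncated setting. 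Second and harder, the minimizers of the local subproblems move continuously as $\lambda$ varies, and the combinatorial explosion of cases in Hales' original proof would need to be revisited in a $\lambda$-uniform way. For each fixed rational $\lambda$ one might in principle carry out a Hales-style computer-assisted reduction, but producing a single argument valid for all $\lambda \in (0,1)$ appears to require a genuinely new idea, and this is where I expect the real difficulty to lie.
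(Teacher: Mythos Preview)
The statement you are attempting is Conjecture~\ref{truncated Kepler}, and the paper does \emph{not} prove it: it is explicitly recorded as open. Remark~1.12 makes exactly the two observations you make, namely that the case $\lambda\ge 1$ follows from (\ref{first}) together with Hales' proof of the Kepler conjecture, and that the range $0<\lambda<1$ is open, with the Hales inequality $24+2(m-12)\alpha_0\le\sum_i\|\x_i\|$ from \cite{L-12} flagged as a possibly useful ingredient. So on the first regime your write-up matches the paper, and on the second regime there is nothing in the paper to compare against.

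Your outline for $0<\lambda<1$ is a sensible research programme, not a proof, and you say so yourself. A couple of points worth tightening if you pursue it. First, your appeal to Theorem~\ref{truncated dodecahedron} for the pointwise Voronoi bound is only available for $0<\lambda\le r_{mid}-1\approx 0.1755$, not on all of $(0,1)$; beyond that range the dodecahedral local bound is not established in the paper, so even the ``locally denser than FCC'' heuristic you invoke is currently unsupported for larger $\lambda$. Second, your reduction to an averaged inequality $\sum_i \vol_3(\VV_i\cap(1+\lambda)\BB^3)\ge N_R\cdot\vol_3(\mathbf{R}_{\mathrm{FCC}}\cap(1+\lambda)\BB^3)$ tacitly assumes the packing is saturated enough that $\mathbf{P}_\lambda$ fills each Voronoi cell up to the truncation; for sparse packings this identification of $\mathbf{P}_\lambda\cap\VV_i$ with $(\c_i+(1+\lambda)\BB^3)\cap\VV_i$ can fail, so a preliminary reduction to saturated or near-saturated packings would be needed before the Voronoi rewriting is legitimate. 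Neither of these is fatal to the strategy, but both are gaps that would have to be closed before the Hales--Ferguson adaptation could even begin.
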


\begin{remark}
Hales's celebrated proof \cite{L-05} of the Kepler conjecture and (\ref{first}) imply Conjecture \ref{truncated Kepler} for any $\lambda\geq 1$. On the other hand, Conjecture \ref{truncated Kepler} seems to be open for $0<\lambda< 1$. For answering the latter question, the following inequality of Hales \cite{L-12} might be helpful: Let $\alpha_0 := 1.26$, and $\x_i$, $i=1,2,\ldots, m$ be a set of points in $\Eu^3$ satisfying $2 \leq \| \x_i \| \leq 2 \alpha_0$ and $2 \leq \| \x_i - \x_j \|$ for any $i \neq j$. Then $24+2(m-12)\alpha_0\leq\sum_{i=1}^{m}\|\x_i\|$. (For a related concept we refer the interested reader to
\cite{BeLi}.)
\end{remark}

In the rest of this note we prove the theorems stated.

\section{Proofs of Theorems~\ref{thm:lattice} and \ref{thm:Dowker}}

To prove the theorems, we use the description of the properties of Voronoi cells of point systems in normed planes in e.g. \cite{AKL}. In particular, we note that as $\MM$ is strictly convex and it has a $C^{\infty}$-class boundary, for any distinct $\x, \y \in \M$, $B(\x,\y)$ is a $C^{\infty}$-class, simple curve. For the proofs of our theorems, we need Lemma~\ref{lem:Voronoi}.

\begin{lemma}\label{lem:Voronoi}
For any mutually distinct points $\x,\y,\z$, $B(\x,\y)$ and $B(\y,\z)$ intersect if and only if $\x,\y,\z$ are not collinear, and in this case they have a unique intersection point.
\end{lemma}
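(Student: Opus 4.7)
My plan is to split the equivalence into its two directions and to handle existence and uniqueness separately in the nontrivial case. For the ``only if'' direction, assume $\x,\y,\z$ are collinear on some line $\ell$, and suppose for a contradiction that $\p \in B(\x,\y) \cap B(\y,\z)$. Then $\|\p-\x\|_\M = \|\p-\y\|_\M = \|\p-\z\|_\M =: r$, placing three distinct collinear points on the Minkowski circle $\p + r\,\partial\MM$. Strict convexity of $\MM$ forces $\ell$ to meet this circle in at most two points, a contradiction.

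For the ``if'' direction, assume $\x,\y,\z$ are non-collinear. For existence of a common point of the two bisectors, I would consider the continuous function $f : B(\x,\y) \to \Re$ defined by $f(\p) := \|\p-\y\|_\M - \|\p-\z\|_\M$. A zero of $f$ is precisely a point of $B(\x,\y) \cap B(\y,\z)$, so since $B(\x,\y)$ is homeomorphic to $\Re$ it suffices to establish that $f$ changes sign. The $\o$-symmetry of $\MM$ makes $B(\x,\y)$ symmetric about $\mathbf{m} := (\x+\y)/2$, so its two branches escape to infinity along opposite rays $\pm\mathbf{u}$, where $\mathbf{u} \in \partial\MM$ is the unique (up to sign) point at which the tangent to $\partial\MM$ is parallel to $\y-\x$. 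A standard asymptotic expansion of $\|\cdot\|_\M$ then yields $f(\p) \to \pm\, \phi_{\mathbf{u}}(\z-\y)$ as $\p$ escapes along the two branches of $B(\x,\y)$, where $\phi_{\mathbf{u}}$ denotes the differential of $\|\cdot\|_\M$ at $\mathbf{u}$. This linear functional vanishes on $\z-\y$ if and only if $\z-\y$ is parallel to the tangent direction at $\mathbf{u}$, if and only if $\z-\y \parallel \y-\x$, if and only if $\x,\y,\z$ are collinear. Non-collinearity therefore forces opposite nonzero limits at the two ends of $B(\x,\y)$, and the intermediate value theorem produces a zero of $f$.

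For uniqueness, suppose $\p_1 \ne \p_2$ were both common points of $B(\x,\y)$ and $B(\y,\z)$, with $r_i := \|\p_i - \x\|_\M$. The two Minkowski circles $C_i := \p_i + r_i\,\partial\MM$ would each pass through the three distinct points $\x,\y,\z$. However, any two distinct Minkowski circles meet in at most two points: the Minkowski disks $D_i := \p_i + r_i\MM$ are convex, so $D_1 \cap D_2$ is a convex region whose boundary alternates between arcs of $\partial D_1$ and $\partial D_2$, and by strict convexity it has at most two transition vertices, which are precisely the intersection points of $C_1$ and $C_2$. Three common points contradict this. The main obstacle is the asymptotic sign analysis in the existence step: identifying the asymptotic directions of $B(\x,\y)$ with $\pm\mathbf{u}$, making the limit formula rigorous, and translating the non-collinearity of $\x,\y,\z$ into the non-vanishing of $\phi_{\mathbf{u}}(\z-\y)$. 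All three ingredients rely critically on the $C^\infty$-smoothness, strict convexity, and central symmetry of $\MM$, which are in force by the reduction stated just before the lemma and on the structural description of bisectors in \cite{AKL}.
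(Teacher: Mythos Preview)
Your approach is more detailed than the paper's, which is essentially a one-sentence reformulation: a point lies on $B(\x,\y)\cap B(\y,\z)$ if and only if some homothetic copy of $\MM$ has $\x,\y,\z$ on its boundary, and the paper then simply observes that the homothets of $\MM$ through two given points form a one-parameter family (parametrized by $B(\x,\y)$), declaring this sufficient without further justification. Your collinear case and your existence argument via the intermediate value theorem are correct in outline; the identification of the asymptotic directions of $B(\x,\y)$ with $\pm\mathbf{u}$ and the limit $f(\p)\to\pm\phi_{\mathbf{u}}(\z-\y)$ follow from smoothness and strict convexity as you indicate, and the translation of non-collinearity into $\phi_{\mathbf{u}}(\z-\y)\ne 0$ is exactly right.

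There is, however, a genuine gap in your uniqueness step. You assert that $\partial(D_1\cap D_2)$ ``by strict convexity has at most two transition vertices.'' Strict convexity of $D_1$ and $D_2$ alone does not yield this: a Euclidean disk and a sufficiently eccentric concentric ellipse are both strictly convex, yet their boundaries meet in four points, so $\partial(D_1\cap D_2)$ has four transition vertices. What you must exploit is that $D_1$ and $D_2$ are \emph{homothets of the same body} $\MM$, and this requires a separate argument. For $r_1=r_2$ one can observe that each point of $C_1\cap C_2$ determines a chord of $C_1$ of direction $\p_2-\p_1$ and fixed Euclidean length $\lVert \p_2-\p_1\rVert$, and a strictly convex closed curve carries at most two parallel chords of a prescribed length. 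The case $r_1\ne r_2$ needs its own treatment (for instance, showing that $\q\mapsto\lVert\q-\p_2\rVert_{\MM}$ restricted to $C_1$ has a single local maximum and a single local minimum). As written, the phrase ``by strict convexity it has at most two transition vertices'' is doing work it cannot do.
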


\begin{proof}
The proof is based on the following reformulation of the statement: if $\x,\y,\z$ are collinear, then there is no homothetic copy of $\MM$ containing all of them in its boundary, and if they are not collinear, there is a unique such copy. To prove this statement, it is enough to observe that for any distinct points $\x,\y$, there is a $1$-parameter family of homothetic copies of $\MM$ containing them in their boundaries.
\end{proof}

First, we show how Theorem~\ref{thm:Dowker} yields Theorem~\ref{thm:lattice}.

Let $\varepsilon > 0$ be arbitrary. Consider a packing $\P = \{ \c + \MM : \c \in X \}$ of translates of $\MM$ such that the density of $X+\MM$ with respect to its outer parallel domain $X+ (1+\lambda) \MM$ of outer radius $\lambda$ is at least $\delta_{\M}(\lambda) - \varepsilon$. Note that adding translates of $\MM$ to $\P$ may decrease its density, and thus, we cannot assume that it is a saturated packing. Nevertheless, we show that there is some $r > 0$ and a modified packing $\P'$ with density at least $\delta_{ \M}(\lambda) - \varepsilon$ such that for any $\x \in { \M}$, $\| \x -\c \|_{ \M} \leq 2r$ for some $\c \in X$.

Indeed, for any $r > 2$, let $X_r = X \cap (r-1) \MM$. Clearly, we may choose some sufficiently large value $r'$ such that the density of $X_{r'} + \MM$ with respect to its outer parallel domain of outer radius $\lambda$ is at least $\delta_{\M}(\lambda) - \varepsilon$. Choose some countable set $Y \subset {\M}$ such that $\| \y - \y' \|_{\M} \geq 2r'$ for any distinct $\y, \y' \in Y$, and $ Y + 2r'\MM = \M$. We define $X' = \bigcup_{\y \in Y} (\y+ X_{r'})$. Then $\P = \{ \c + \MM : \c \in X' \}$ is a packing of translates of $\MM$. Furthermore, the density of this packing is at least $\delta_{\M}(\lambda) - \varepsilon$, and for any $\z \in {\M}$, we have $\| \z-\c \|_{ \M} \leq 2r'$ for some $\c \in X'$.

In the remaining part, we show that if $\P = \{ \c + \MM : \c \in X \}$ is a packing of translates of $\MM$ such that for some $r > 0$, $X + r \MM = {\M}$, then
\begin{equation}\label{eq:latticedens}
\frac{\area(\MM)}{A_6(\MM, \lambda)} \geq \delta_{\M}(\lambda, \P).
\end{equation}
Here we observe that by Theorem~\ref{thm:Dowker}, there is an $\o$-symmetric B-$6$-gon circumscribed about $\MM$ with area equal to $A_6(\MM, \lambda)$. The sides of this B-$6$-gon belong to bisectors of the form $B(\o, \pm \x)$, $B(\o, \pm 2\y)$ and $B(\o, \pm( \x +\y) )$ for some linearly independent vectors $\x, \y \in \M$, implying that the packing $\P_{lattice} = \{ k \x + m \y + \MM : k,m \in \mathbb{Z} \}$ is a lattice packing of $\MM$ with density $\frac{\area(\MM)}{A_6(\MM, \lambda)}$, and thus, (\ref{eq:latticedens}) implies Theorem~\ref{thm:lattice}.

Let us decompose $\M$ into the Voronoi cells of the elements of $\P$, where for any $\c_i \in X$, $\CC_i$ denotes the cell of $\c_i$ in this decomposition. Furthermore, by our assumptions and Lemma~\ref{lem:Voronoi}, $\CC_i$ is a starlike topological disk, and its boundary is a closed chain of arcs such that each arc is a connected part of a bisector $B(\c_i,\c_j)$ for some $\c_j \in X$. These arcs are called the \emph{sides} of $\CC_i$, and the unique intersection point of two consecutive sides is a \emph{vertex} of $\CC_i$. This also shows that the decomposition is edge-to-edge. Let $n_i$ denote the number of sides of $\CC_i$, and $A_i$ denote the area of the region $\CC_i \cap \left( \c_i +(1+\lambda) \MM \right)$.

For any $R > 0$, let $\P_R$ denote the family of Voronoi cells contained in $R \BB^2$, and let $N_R$ denote the cardinality of $\P_R$.
Since for any cell $\CC_i$, $\c_i + \MM \subseteq \CC_i \subseteq \c_i + r \MM$, it is easily deduced that the limit $n:=\lim_{R \to \infty} \frac{\sum_{\CC_i \in \P_R} n_i}{N_R }$ exists. Furthermore, as the edge graph of the Voronoi decomposition, formed by the vertices and sides of the cells, is planar, by Euler's theorem we obtain that $n \leq 6$. 
Then, by Theorem~\ref{thm:Dowker},
\begin{equation}\label{eq:lattice}
\frac{\sum_{\CC_i \in \P_R} A_i}{N_R} \geq \frac{\sum_{\CC_i \in \P_R} A_{n_i}(\MM,\lambda)}{N_R} \geq A_6(\MM, \lambda),
\end{equation}
which readily yields the inequality in (\ref{eq:latticedens}).

\begin{figure}[ht]
\begin{center}
\includegraphics[width=0.7\textwidth]{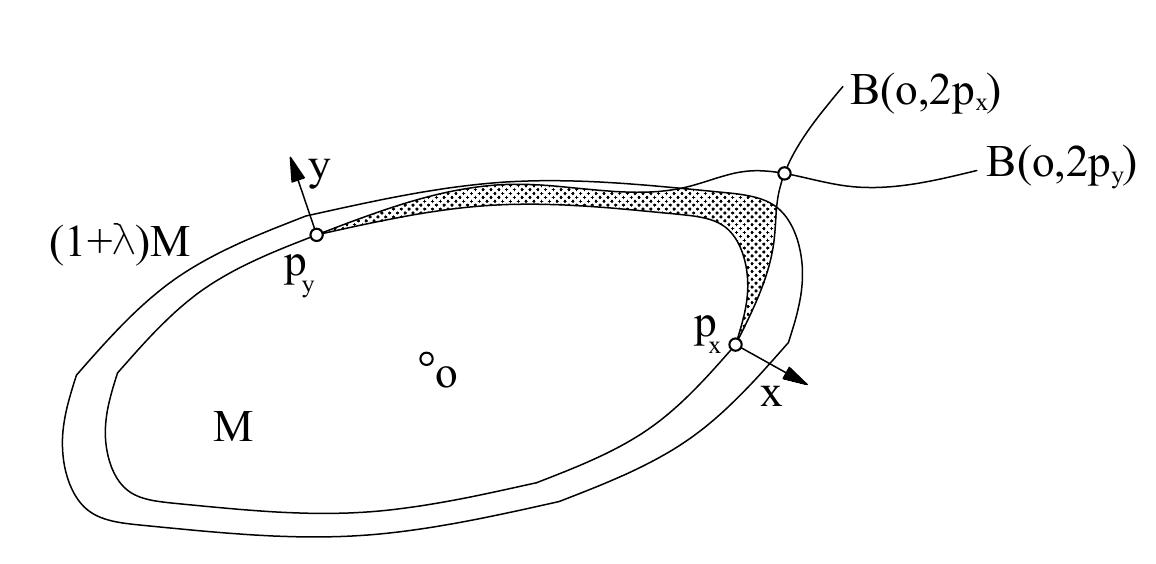}
\caption{An illustration for the proof of Theorem~\ref{thm:Dowker}. The area of the dotted region is $A(\widehat{\x\y})$.}
\label{fig:B-k-gon}
\end{center}
\end{figure}

In the remaining part of the section, we prove Theorem~\ref{thm:Dowker}.
First, observe that, by compactness, for any $n$ there is a B-$n$-gon $\PP$ of minimal $\lambda$-reduced area containing $\MM$, and every side of $\PP$ touches $\MM$.
Let $\mathcal{S}$ denote the family of closed oriented arcs (in counterclockwise direction) in the Euclidean unit circle $\mathbb{S}^1$, where $\widehat{\x\y}$ denotes the closed counterclockwise oriented arc in $\mathbb{S}^1$ from $\x$ to $\y$.
For any $\x \in \mathbb{S}^1$, let $\p_x$ denote the unique point of $\bd (\MM)$ with outer unit normal vector $\x$. For any $\x, \y \in \mathbb{S}^1$, let 
$R(\widehat{\x\y})$ denote the (closed) region bounded by the counterclockwise oriented arc in $\bd (\MM)$ from $\p_{x}$ to $\p_y$ and the bisectors $B(\o,2\p_x)$, $B(\o,2\p_y)$ touching $\MM$ at $\p_{x}$ and $\p_y$, respectively, such that $\o \notin R(\widehat{\x\y})$ (see Figure~\ref{fig:B-k-gon}). Finally, define $A(\widehat{\x\y})$ as the area of $(1+\lambda) \MM \cap R(\widehat{\x\y})$. We recall Lemma 5 from \cite{BL23}.

\begin{lemma}\label{lem:functional}
Let $f : \mathcal{S} \to \Re$ be a bounded function with $f(\widehat{pp})=0$ for all $p \in \S^1$. For any integer $n \geq 3$, let
\[
m_n = \inf  \left\{  \sum_{S \in X} f( S ) : X \subset \mathcal{S} \hbox{ is a tiling of } \S^1 \hbox{ with } |X| = n \right\}.
\]
If for any $\widehat{x_2x_3} \subset \widehat{x_1x_4}$, we have
\[
f(\widehat{x_1x_3})+f(\widehat{x_2x_4}) \leq f(\widehat{x_1x_4})+f(\widehat{x_2x_3}),
\]
then the sequence $\{ m_n \}$ is convex.
Furthermore, if in addition, there is some positive integer $k$ that divides $n$, and $f$ has $k$-fold rotational symmetry, and there is an $n$-element tiling  $X$ of $\S^1$ such that $m_n = \sum_{S \in X} f(S)$, then there is an $n$-element tiling $X'$ of $\S^1$ with $k$-fold rotational symmetry such that $m_n = \sum_{S \in X' } f(S)$.
\end{lemma}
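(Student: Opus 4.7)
The plan is to prove Lemma~\ref{lem:functional} via the classical Dowker-style rearrangement argument. There are two claims: convexity of $\{m_n\}$, and the existence of a $k$-fold rotationally symmetric minimizer when $k \mid n$ and $f$ is $k$-fold symmetric. Both will be obtained from a single ``swap lemma'': given any tilings $X$ and $Y$ of $\S^1$ with $|X|+|Y|$ even, one can build two tilings $X', Y'$ of sizes $\tfrac{|X|+|Y|}{2}$ whose endpoint multiset equals $X \cup Y$, with $\sum_{S\in X'}f(S)+\sum_{S\in Y'}f(S) \leq \sum_{S\in X}f(S)+\sum_{S\in Y}f(S)$.

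For the convexity step, given near-minimizers $X$ of $m_{n-1}$ and $Y$ of $m_{n+1}$, I would apply the swap lemma to produce two $n$-element tilings $X', Y'$ whose combined $f$-sum is no larger than the combined $f$-sum of $X$ and $Y$. Since both $X'$ and $Y'$ give upper bounds for $m_n$, taking infima gives $2m_n \leq m_{n-1}+m_{n+1}$. The swap lemma itself will be proved by induction on the number of ``$XY$-alternations'' in the cyclic sequence of $|X|+|Y|$ endpoints (after a generic rotation of $Y$ to put all endpoints in distinct positions). The inductive step uses a local configuration in which a $Y$-arc $\widehat{x_2x_3}$ is nested inside an $X$-arc $\widehat{x_1x_4}$, whereupon the hypothesis
\[
f(\widehat{x_1x_3})+f(\widehat{x_2x_4}) \leq f(\widehat{x_1x_4})+f(\widehat{x_2x_3})
\]
converts the pair into a configuration with strictly fewer alternations while not increasing total $f$-cost. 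The base case (when the endpoints of $X$ and $Y$ are grouped into at most two alternating runs) reduces to direct application of the hypothesis to the remaining nested pairs, and the constraint $|Y|-|X|=2$ ensures the accounting balances out to tilings of the desired sizes.

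For the symmetric minimizer, let $R$ be rotation of $\S^1$ by $2\pi/k$. By compactness of the space of $n$-element subsets of $\S^1$ together with boundedness of $f$, the infimum $m_n$ is attained by some tiling $X$. Each rotate $R^jX$, $0 \leq j < k$, is also a minimizer, by $k$-fold symmetry of $f$. Applying the swap lemma to $X$ and $R X$ produces two $n$-element tilings, each a minimizer (by extremality of $X$ forcing equality), which are ``closer'' to $R$-invariant. Iterating the construction and passing to a limit (again by compactness) yields an $n$-element minimizer $X^*$ invariant under $R$, hence $k$-fold symmetric. The case $n$ even and $k=2$ gives the $\o$-symmetric statement.

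The main obstacle will be the combinatorial bookkeeping for the swap lemma: when $X$- and $Y$-endpoints interleave in complicated cyclic patterns, I need a clean monovariant that decreases under each local swap and terminates at a configuration where $X'$ and $Y'$ are unambiguously identifiable as tilings of the correct sizes. I expect the number of maximal blocks of consecutive same-type endpoints (or an equivalent alternation count) to serve as such a monovariant, and the parity/counting constraint $|Y|-|X|=2$ to guarantee that the final split has the right cardinalities. The supermodularity hypothesis and the normalization $f(\widehat{pp})=0$ together handle the degenerate cases where arc endpoints coincide in the limit.
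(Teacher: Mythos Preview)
The paper does not prove this lemma; it is quoted verbatim as Lemma~5 of \cite{BL23}, so there is no in-paper argument to compare your proposal against.

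Your swap/rearrangement strategy is the standard Dowker route and does lead to a correct proof of both parts. Two steps in your outline need repair, however, and both stem from the fact that $f$ is only assumed bounded, not continuous. First, the ``generic rotation of $Y$'' to separate coinciding endpoints is illegitimate here: a small rotation can change $\sum_{S\in Y} f(S)$ by an uncontrolled amount. Ties between $X$- and $Y$-endpoints must be handled combinatorially (fix an order among coinciding points and use $f(\widehat{pp})=0$ for the degenerate arcs that then arise), not by perturbation. Second, for the symmetry clause you assert that attainment of $m_n$ follows from compactness and boundedness of $f$; this is false without lower semicontinuity, and in any event the lemma takes attainment as a \emph{hypothesis}, so there is nothing to prove there. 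More seriously, your plan to iterate swaps between $X$ and $RX$ and ``pass to a limit'' again presupposes continuity of $f$ that you do not have: a limit of minimizers need not be a minimizer. The correct argument is finitary: from a minimizer $X$ and its rotates $RX,\ldots,R^{k-1}X$ one constructs, by a finite sequence of swaps among these $k$ minimizing $n$-tilings, an $R$-invariant $n$-tiling whose total $f$-value equals $m_n$, with no limiting step at all.
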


Let $I[X] : \Eu^2 \to \{ 0,1\}$ denote the indicator function of the set $X \cap (1+\lambda) \MM$ for any $X \subseteq \Eu^2$. Consider arcs $\widehat{x_2x_3} \subset \widehat{x_1x_4} \subset \mathbb{S}^1$. Then it is easy to check that
\[
I[R(\widehat{\x_1\x_4})](\q)+ I[R(\widehat{\x_2\x_3})](\q) - I[R(\widehat{\x_1\x_3})](\q) - I[R(\widehat{\x_2\x_4})](\q) \geq 0
\]
is satisfied for any $\q \in \Eu^2$. This shows that the conditions in Lemma~\ref{lem:functional} are met for the function $f(\cdot) = A(\cdot)$, implying Theorem~\ref{thm:Dowker}.

\section{Proof of Theorem~\ref{thm:planeVoronoi}}

In the proof, for any points $\p, \q \in \PPP$, where on the sphere we assume that they are not antipodal, we denote by $[\p,\q]$ the shortest geodesic arc connecting $\p$ and $\q$.  For any triangle $\TT$ with $\q$ as a vertex, we set
\[
\rho(\BB,\TT) := \frac{\area(\TT \cap \BB)}{\area(\TT \cap \BB_{\lambda})}, \hbox{ and } \hat{\rho}(\BB,\TT) := \frac{\area(\TT \cap \BB_{\lambda})}{\area(\TT)}.
\]

Before proving the theorem, we first prove two lemmas.

\begin{lemma}\label{lem:constcurv1}
Let $L$ be a line through $\q$, and let $R_p$ be a half line starting at a point $\p$ of $L \setminus \inter (\BB)$ such that $R_p$ is perpendicular to $L$. For any $s >0 $ (on the sphere for any $0 < s < \frac{\pi}{2})$, let $\p(s)$ denote the point of $R_p$ whose distance from $\p$ is $s$. Furthermore, for any $0 < s_1 < s_2$ (on the sphere for any $0 < s_1 < s_2 < \frac{\pi}{2})$, let $\TT(s_1,s_2)$ denote the triangle with vertices $\q, \p(s_1)$ and $\p(s_2)$. Then $\rho(\BB,\TT(s_1,s_2))$ and $\hat{\rho}(\BB,\TT(s_1,s_2))$ are non-increasing functions of $s_1$ and $s_2$.
\end{lemma}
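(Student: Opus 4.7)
The plan is to integrate in geodesic polar coordinates centered at $\q$, so that each of the three relevant areas becomes a single integral against the angle, and then to reduce monotonicity of $\rho$ and $\hat{\rho}$ to pointwise monotonicity of the integrand ratios combined with an elementary weighted-average lemma.

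In any plane of constant curvature, the area element in polar coordinates $(\theta, t)$ at $\q$ factors as $J(t)\,dt\,d\theta$, where $J(t)$ equals $t$, $\sin t$, or $\sinh t$ according to the curvature; set $F(u) := \int_0^u J(t)\,dt$, a strictly increasing positive function. Measure $\theta$ at $\q$ from the ray of $L$ through $\p$, and let $\tau(\theta)$ be the distance from $\q$ to the geodesic extension of $R_p$ along the ray of angle $\theta$. By the right-triangle trigonometry of the ambient plane (which in the Euclidean case reduces to $\tau(\theta) = d/\cos\theta$ with $d := \dist(\q,\p)$), $\tau(\theta)$ is strictly increasing in $\theta$ on the relevant range. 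Since $\p \notin \inter(\BB)$ we have $d \geq r$, hence $\tau(\theta) \geq r$ throughout; on the sphere, the bound $s < \pi/2$ keeps us in the range where $\tau$ is well-defined and monotone. Writing $\theta_i := \theta(\p(s_i))$, which is strictly increasing in $s_i$, Fubini then yields
\begin{align*}
\area(\TT(s_1,s_2) \cap \BB) &= (\theta_2 - \theta_1)\,F(r),\\
\area(\TT(s_1,s_2) \cap \BB_{\lambda}) &= \int_{\theta_1}^{\theta_2} F\bigl(\min\{(1+\lambda)r, \tau(\theta)\}\bigr)\,d\theta,\\
\area(\TT(s_1,s_2)) &= \int_{\theta_1}^{\theta_2} F(\tau(\theta))\,d\theta,
\end{align*}
where the first identity uses $\tau(\theta) \geq r$, so that each radial integral inside $\BB$ reaches the full depth $r$.

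The core observation is that both integrand ratios I need are non-increasing in $\theta$. For $\rho$, the numerator integrand is the constant $F(r)$ while the denominator integrand $F\bigl(\min\{(1+\lambda)r, \tau(\theta)\}\bigr)$ is non-decreasing in $\theta$, so their pointwise ratio is non-increasing. For $\hat{\rho}$, the ratio $F\bigl(\min\{(1+\lambda)r, \tau(\theta)\}\bigr)/F(\tau(\theta))$ equals $1$ while $\tau(\theta) \leq (1+\lambda)r$ and equals $F((1+\lambda)r)/F(\tau(\theta))$ otherwise; in both regimes it is non-increasing in $\theta$ since $F$ and $\tau$ are increasing.

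To finish, I will invoke the elementary weighted-average lemma: if $g, h > 0$ on an interval and $g/h$ is non-increasing, then $\int_a^b g(\theta)\,d\theta \big/ \int_a^b h(\theta)\,d\theta$ is non-increasing in $b$ and non-increasing as $a$ grows, because the total ratio is a weighted average of subinterval ratios that inherit the monotonicity from $g/h$. Applied separately to the two pairs of integrands above, and using that $\theta_1, \theta_2$ are strictly increasing functions of $s_1, s_2$, this yields the claimed monotonicity of both $\rho(\BB, \TT(s_1, s_2))$ and $\hat{\rho}(\BB, \TT(s_1, s_2))$ in each of $s_1$ and $s_2$. The only delicate point I foresee is the curvature-uniform verification that $\tau(\theta)$ is strictly increasing, which requires writing out the appropriate right-triangle identity in each of the three geometries; everything else is routine once the polar decomposition is in place.
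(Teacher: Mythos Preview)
Your proof is correct and takes a genuinely different route from the paper. The paper argues synthetically: fixing $s_1$ and comparing $\TT(s_1,s_2)$ with $\TT(s_1,s_2')$, it assumes (by a continuity/density step) that the two angles at $\q$ are rationally related, dissects the larger triangle into thin equal-angle slices $\TT_1,\ldots,\TT_k$ with common vertex $\q$, and observes that the rotation $\varphi$ about $\q$ carrying one slice to the next satisfies $\varphi(\TT_i)\subseteq\TT_{i+1}$ while preserving $\TT_i\cap\BB$ and $\TT_i\cap\BB_\lambda$; the mediant inequality then forces the ratios to be monotone across slices. You instead work analytically in geodesic polar coordinates at $\q$, write each of the three areas as a single integral over the angle, reduce everything to pointwise monotonicity of the integrand ratios (which follows solely from the fact that the radial hitting distance $\tau(\theta)$ to $R_p$ is increasing), and conclude via the weighted-average lemma for ratios of integrals. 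Your argument treats the three geometries uniformly through the density factor $J(t)$, avoids the rationality/approximation step, and handles both variables $s_1,s_2$ and both ratios $\rho,\hat\rho$ in one stroke, whereas the paper needs a separate case split for $\hat\rho$ (according to whether $\p(s)$ lies inside $\BB_\lambda$). The paper's approach, in return, is coordinate-free and makes the geometric mechanism---rotational nesting of the slices---visually transparent. Both proofs ultimately rest on the same fact: the radial depth of the triangle along each ray from $\q$ is non-decreasing in the angle, which is exactly your monotonicity of $\tau(\theta)$ and exactly the containment $\varphi(\TT_i)\subseteq\TT_{i+1}$ in the paper.
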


\begin{proof}
Let $0  < s_1 < s_2 < s_2'$ be given (on the sphere assume also that $s_2' < \frac{\pi}{2}$). Without loss of generality, assume that the ratio of the angles of $\TT(s_1,s_2)$ and $\TT(s_1,s_2')$ at $\q$ is rational. Then we can dissect $\TT(s_1,s_2')$ into triangles $\TT_1, \TT_2, \ldots, \TT_k$, all having $\q$ as a vertex such that they have equal angles at $\q$, denoted by $\gamma$, any two consecutive members share a side, and $\p(s_2)$ is a vertex of some triangles $\TT_i$ and $\TT_{i+1}$. Let $\varphi : \PPP \to \PPP$ be the rotation around $\q$ with angle $\gamma$ that moves $\p(s_1)$ closer to $\p(s_2)$. Then, for any value of $i$, $\varphi(T_i) \subseteq T_{i+1}$, $\varphi(\TT_i \cap \BB) = \TT_{i+1} \cap \BB$ and $\varphi(\TT_i \cap \BB_{\lambda}) = \TT_{i+1} \cap \BB_{\lambda}$.
This proves that $\rho(\BB,\TT(s_1,s_2)) \geq \rho(\BB,\TT(s_1,s_2'))$, implying that $\rho(\BB,\TT(s_1,s_2))$ is a non-increasing function of $s_2$. To prove that it is a non-increasing function of $s_1$, we may apply a similar argument.

To verify the monotonicity property of $\hat{\rho}(\BB,\TT(s_1,s_2))$, note that there is a unique value $\bar{s}$ such that $\p(s) \in \BB_{\lambda}$ if and only if $s \leq \bar{s}$. Furthermore, if $0 < s_1 < s_2 \leq \bar{s}$, then $\hat{\rho}  (\BB,\TT(s_1,s_2)) =1$. Thus, to prove that $\hat{\rho}(\BB,\TT(s_1,s_2))$ is a non-increasing function of $s_2$, it is sufficient to consider values $\bar{s} \leq s_1 < s_2 < s_2'$. But in this case, repeating the argument in the previous paragraph, we obtain
$\hat{\rho}(\BB,\TT(s_1,s_2)) \geq \hat{\rho}(\BB,\TT(s_1,s_2'))$, showing that the function is non-increasing in the second variable. To prove the monotonicity in the first variable, we can repeat the same argument.
\end{proof}

\begin{figure}[ht]
\begin{center}
\includegraphics[width=0.52\textwidth]{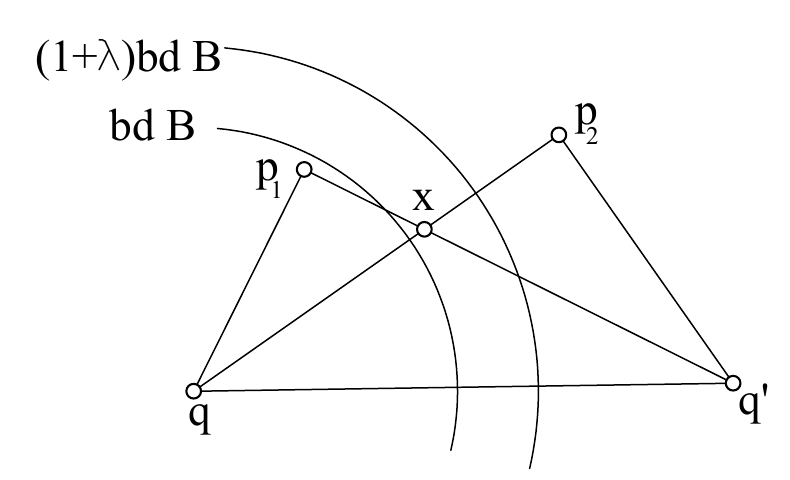}
\caption{An illustration for the proof of Lemma~\ref{lem:constcurv2}.}
\label{fig:constcurv}
\end{center}
\end{figure}

\begin{lemma}\label{lem:constcurv2}
Let $\q'$ be a point at distance $R(r)$ from $\q$. For $i=1,2$, let $\TT_i = \conv \{ \q, \q', \p_i \}$ be a triangle with right angle at $\p_i$, let $r_i$ denote the distance of $\p_i$ and $\q$. Assume that $r \leq r_1 < r_2 < R(r)$. Then
\[
\rho(\BB,\TT_1)  \geq \rho(\BB,\TT_2), \hbox{ and } \hat{\rho}(\BB,\TT_1) \geq \hat{\rho}(\BB,\TT_2).
\] 
\end{lemma}

\begin{proof}
It is easy to show that $r_1 < r_2$ implies that the angle of $\TT_1$ at $\q$ is larger than that of $\TT_2$, and thus we may assume that $[\q,\p_2]$ and $[\q',\p_1]$ cross. Let the intersection point of these segments be $\x$ (see Figure~\ref{fig:constcurv}). Let $\TT_a=\conv \{ \q,\p_1,\x \}$, $\TT_b = \conv \{ \q, \x, \q' \}$ and $\TT_c = \conv \{ \x, \q', \p_2 \}$. Then, by Lemma~\ref{lem:constcurv1}, we have
\[
\rho(\BB,\TT_a) \geq \rho(\BB,\TT_b) \geq \rho(\BB,\TT_c),
\]
implying that $\rho(\BB,\TT_1) \geq \rho(\BB,\TT_2)$. By the same lemma we similarly obtain that $\hat{\rho}(\BB,\TT_a) \geq \hat{\rho}(\BB,\TT_b)$. Furthermore, an argument similar to the proof of Lemma~\ref{lem:constcurv1} yields $\hat{\rho}(\BB,\TT_b) \geq \frac{\area(\BB_{\lambda} \cap \TT_c)}{\area(\TT_c)}$, which readily implies the required inequality for $\hat{\rho}$.
\end{proof}

Now we prove Theorem~\ref{thm:planeVoronoi}.
Let us dissect the Voronoi cell $\VV$ of $\q$ into triangles by connecting $\q$ to all vertices of $\VV$. Furthermore, if the orthogonal projection of $\q$ onto any side of $\VV$ lies inside the side, we dissect the corresponding triangle obtained in the previous step into two right triangles by connecting $\q$ to this ortogonal projection. Note that any triangle obtained in this way is either obtuse, or a right triangle.

Let $\TT$ be a triangle obtained in the previous paragraph. Let $r_T$ denote the distance of $\q$ from the sideline of $\TT$ opposite to it, and let $d_T$ denote the distance of $\q$ from the farthest vertex of $\TT$. Note that $r_T \geq r$ and $d_T \geq R(r)$.
By Lemma~\ref{lem:constcurv1}, we have that fixing the value of $r_T$, $\rho(\BB,\TT)$ and $\hat{\rho}(\BB,\TT)$ are maximized if $\TT$ is a right triangle, and $d_T= R(r)$. Under these conditions, by Lemma~\ref{lem:constcurv2}, $\rho(\BB,\TT)$ is maximal if $r_T=r$. But in this case $\rho(\BB,\TT) = \sigma_{reg}(r)$ and $\hat{\rho}(\BB,\TT) = \bar{\sigma}_{reg}(r)$, implying Theorem~\ref{thm:planeVoronoi}.

\section{Proof of Theorem~\ref{truncated dodecahedron}}

Without loss of generality, we may assume that the distance of every face plane of $\VV$ from $\o$ is at most $1+\lambda$.
Let the face-planes of $\VV$ be $H_i$ with $i=1,2,\ldots, m$. For any $H_i$, let $\CC_i$ denote the spherical segment of $(1+\lambda) \BB^3$ truncated by $H_i$ from this ball.
To minimize the density, we need to maximize $\vol_3 \left( \bigcup_{i=1}^m \CC_i \right)$. Let the height of $\CC_i$, defined as $(1+\lambda)$ minus the distance of $H_i$ from $\o$, be denoted by $h_i$.
Note that 
\[
\vol_3 \left( \bigcup_{i=1}^m \CC_i \right) \leq \sum_{i=1}^m \vol_3(\CC_i).
\]
A well-known formula shows that $\vol_3(\CC_i) = \pi \left( (1+\lambda) h_i^2 - \frac{h_i^3}{3} \right)$. We set
\[
F(h_1,h_2,\ldots, h_m) := \sum_{i=1}^m \pi \left( (1+\lambda) h_i^2 - \frac{h_i^3}{3} \right).
\]
Consider the function $f(h) := \pi \left( (1+\lambda) h^2 - \frac{h^3}{3} \right)$. Then $f'(h)=\pi h(2(1+\lambda)-h)>0$ and $f''(h) = \pi (2(1+\lambda)-2h)>0$ for all $0<h<1+\lambda$. It follows that $f$ is a
strictly increasing and convex function of $h$ on the interval $[0,\lambda]$.
Hence, if $\sum_{i=1}^m h_i \leq 12 \lambda$, then
\begin{equation}\label{eq:first}
F(h_1,h_2,\ldots, h_m)=\sum_{i=1}^m f(h_i)  \leq 12 \pi \left( (1+\lambda) \lambda^2 - \frac{\lambda^3}{2} \right).
\end{equation}
Note that the right-hand side of (\ref{eq:first}) is $\vol_3 ((1+\lambda) \BB^3 \setminus \DD)$, implying Theorem~\ref{truncated dodecahedron} in this case.

In the remaining part we show that $\sum_{i=1}^m h_i \leq 12 \lambda$. Suppose the contrary, and note that this implies $m > 12$. We recall the following lemma of Hales from \cite{L-12}.

\begin{lemma}\label{lem:Hales}
Let $\alpha_0 := 1.26$, and $\x_i$, $i=1,2,\ldots, m$ be a set of points in $\Eu^3$ satisfying $2 \leq \| \x_i \| \leq 2 \alpha_0$ and $2 \leq \| \x_i - \x_j \|$ for any $i \neq j$.
Then
\[
\sum_{i=1}^m L \left( \frac{1}{2} \| \x_i \| \right) \leq 12,
\]
where $L(t) := \frac{\alpha_0-t}{\alpha_0-1}$ for $0\leq t\leq\alpha_0$.
\end{lemma}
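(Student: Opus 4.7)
The plan is to prove Lemma~\ref{lem:Hales} by a weighted packing argument on the unit sphere $\S^2 \subset \Eu^3$ centered at $\o$, calibrated so that the extremal configuration is the icosahedral kissing arrangement. Set $r_i := \|\x_i\|$, $\mathbf{u}_i := \x_i/r_i$, and $\lambda_i := L(r_i/2) \in [0,1]$; the goal is to exhibit, for each $i$, a region $R_i \subset \S^2$ with pairwise disjoint interiors and $\area(R_i) \geq \tfrac{\pi}{3}\lambda_i$, which at once yields
\begin{equation*}
\sum_i \lambda_i \leq \frac{3}{\pi}\sum_i \area(R_i) \leq \frac{3}{\pi}\cdot 4\pi = 12,
\end{equation*}
with equality precisely for $12$ points at distance $2$ placed at the vertices of a regular icosahedron, whose spherical Voronoi cells are regular spherical pentagons of area $\pi/3$.

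The first step is to translate the pairwise distance constraint $\|\x_i-\x_j\|\geq 2$ into an angular constraint on $\S^2$ via the law of cosines in the triangle $\o\x_i\x_j$:
\begin{equation*}
\theta_{ij} := \angle(\mathbf{u}_i,\mathbf{u}_j) \geq \theta_0(r_i,r_j), \qquad \cos\theta_0(r_i,r_j) = \frac{r_i^2+r_j^2-4}{2 r_i r_j},
\end{equation*}
so that $\theta_0(2,2)=\pi/3$ and $\theta_0$ is an explicit increasing function of each variable. The second step is to take $R_i$ to be the spherical Voronoi cell of $\mathbf{u}_i$ on $\S^2$ (induced by the full family $\{\mathbf{u}_j\}$), truncated by a cap of angular radius $\varphi(r_i)$ centered at $\mathbf{u}_i$, where $\varphi$ is calibrated so that $\varphi(2)$ equals the circumradius of a regular spherical pentagon of area $\pi/3$, $\varphi(2\alpha_0) = 0$, and $\varphi(r_i)+\varphi(r_j) \leq \theta_0(r_i,r_j)$ for all admissible pairs $(r_i,r_j)$; the last property guarantees the disjointness of the $R_i$.

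The main obstacle is the pointwise area inequality $\area(R_i) \geq \tfrac{\pi}{3}L(r_i/2)$, because this cannot be derived from a naive cap area bound --- the kissing number $12$ rests on the rigidity of the icosahedral configuration rather than on pure area on $\S^2$ (a cap of angular radius $\pi/6$ has area strictly less than $\pi/3$). Following Hales \cite{L-12}, I would attack this by decomposing $R_i$ into spherical triangles anchored at $\mathbf{u}_i$ along its boundary edges and invoking a Dowker-type convexity inequality on regular spherical triangulations, analogous in spirit to Theorem~\ref{thm:Dowker}, to reduce the bound to a one-variable inequality on the area of a regular spherical pentagon parametrized by $r_i$. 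The resulting spherical-trigonometry inequality, together with the auxiliary bound on $\theta_0(r_i,r_j)$, is then verified in \cite{L-12} by explicit case analysis and verified numerical (interval-arithmetic) computations; the design of the function $L$ (linear from $(1,1)$ to $(\alpha_0,0)$) is precisely what allows this linear comparison to close with the sharp constant $12$.
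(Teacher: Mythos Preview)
The paper does not prove this lemma at all; it is quoted from Hales \cite{L-12} and used as a black box in the proof of Theorem~\ref{truncated dodecahedron}. Your proposal, once the scaffolding is stripped away, is the same citation: you set up a weighted spherical packing framework, correctly observe that the crucial inequality $\area(R_i)\geq\tfrac{\pi}{3}L(r_i/2)$ cannot come from a cap-area bound (the special case $r_i\equiv 2$ is the kissing number $12$, which no naive area argument reaches), and then defer that inequality to \cite{L-12}. Everything preceding the appeal to \cite{L-12} is heuristic and does not by itself bound $\sum_i L(r_i/2)$.

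Two statements in the scaffolding are also wrong. First, $\theta_0(r_i,r_j)$ is \emph{decreasing}, not increasing, in each variable on $[2,2\alpha_0]$: differentiating $\cos\theta_0=(r_i^2+r_j^2-4)/(2r_ir_j)$ gives $\partial_{r_i}\cos\theta_0=(r_i^2-r_j^2+4)/(2r_i^2r_j)>0$ on that range (since $r_j^2\leq(2\alpha_0)^2<8\leq r_i^2+4$), so the minimum admissible angle shrinks as the radii grow. Second, your disjointness clause is both superfluous and false as stated: since $R_i$ is contained in the spherical Voronoi cell of $\mathbf{u}_i$, the $R_i$ already have disjoint interiors with no condition on $\varphi$; and the condition $\varphi(r_i)+\varphi(r_j)\leq\theta_0(r_i,r_j)$ fails at $r_i=r_j=2$, because the circumradius of a regular spherical pentagon of area $\pi/3$ (the icosahedral Voronoi cell) is about $37.4^{\circ}$, so $2\varphi(2)\approx 74.8^{\circ}>60^{\circ}=\theta_0(2,2)$.
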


Applying Lemma~\ref{lem:Hales} for the centers of the balls generating the faces of $\VV$, we have
\[
12 \geq \sum_{i=1}^m L(1+\lambda-h_i) = \frac{\alpha_0-1-\lambda}{\alpha_0-1} m +\frac{1}{\alpha_0-1} \sum_{i=1}^m h_i.
\]
Combining it with the indirect assumption, we obtain
\begin{equation}\label{eq:estimate1}
12 \lambda < \sum_{i=1}^m h_i \leq 12 (\alpha_0-1)-m(\alpha_0-1-\lambda).
\end{equation}
From this, a rearrangement of the terms in the leftmost and rightmost expressions yields
\[
(m-12)(\alpha_0-1-\lambda) \leq 0,
\]
which contradicts the fact that both factors on the left-hand side are positive.

\vskip0.5cm
{\bf Acknowledgement.}
{\small
K.B. was partially supported by a Natural Sciences and Engineering Research Council of Canada Discovery Grant and Z.L. was partially supported by the National Research, Development and Innovation Office, NKFI, K-147544 and BME Water Sciences and Disaster Prevention TKP2020 Institution Excellence Subprogram, grant no. TKP2020 BME-IKA-VIZ.
}

\end{document}